\thanks{\copyright 2016 Texas State University.}
\begin{document}
\title[\hfilneg EJDE-2016/335\hfil Orbital stability of Gausson solutions]
{Orbital stability of Gausson solutions to logarithmic Schr\"odinger equations}

\author[A. H. Ardila \hfil EJDE-2016/335\hfilneg]
{Alex H. Ardila}

\address{Alex H. Ardila \newline
Department of Mathematics,
IME-USP, Cidade Universit\'aria,
CEP 05508-090, S\~ao Paulo, SP, Brazil}
\email{alexha@ime.usp.br}

\thanks{Submitted July 15, 2016. Published December 28, 2016.}
\subjclass[2010]{35Q55, 35A15, 35B35}
\keywords{Logarithmic Schr\"odinger equation; standing waves;
orbital stability; \hfill\break\indent variational method}

\begin{abstract}
 In this article we  prove of the  orbital stability of the ground state
 for logarithmic Schr\"odinger equation in any dimension and under
 nonradial perturbations. This  general stability result was announced
 by Cazenave and Lions \cite[Remark II.3]{CALO}, but no details were given
 there.
\end{abstract}

\maketitle
\numberwithin{equation}{section}
\newtheorem{theorem}{Theorem}[section]
\newtheorem{lemma}[theorem]{Lemma}
\newtheorem{proposition}[theorem]{Proposition}
\newtheorem{remark}[theorem]{Remark}
\newtheorem{definition}[theorem]{Definition}
\allowdisplaybreaks

\section{Introduction}

In this article we study the logarithmic Schr\"odinger equation
\begin{equation}\label{0NL}
  i\partial_{t}u+\Delta u+u\log |u|^2=0,
\end{equation}
where  $u=u(x,t)$ is a complex-valued function of 
$(x,t)\in \mathbb{R}^N\times\mathbb{R}$, $N\geq1$.
This equation  was proposed by Bialynicki-Birula and Mycielski \cite{CAS} 
in 1976 as a model of nonlinear wave mechanics. It laso has several applications
in quantum mechanics, quantum optics, nuclear physics, open quantum systems
 and  Bose-Einstein  condensation (see e.g. \cite{APLES} and the references therein).  
Recently,  \eqref{0NL} has proved useful for the modeling of several 
nonlinear phenomena including  geophysical applications of magma transport
 \cite{MFG} and nuclear physics \cite{HE}.

The mathematical literature concerning the logarithmic Schr\"odinger equation
does not seem to be very extensive. The Cauchy problem for \eqref{0NL} was 
tread by Cazenave and Haraux \cite{TA} in a suitable functional framework. 
Cazenave \cite{CL}; Cazenave and Lions \cite{CALO}; Blanchard and co.\ 
\cite{PHBJ, PHST}; research the  stability properties of standing waves 
for \eqref{0NL}.  In recent years, the logarithmic NLS equation  has attracted  
some attention  both in the theoretical and the applied mathematical literature.
 Among such works, let us mention
\cite{AHAA, PMS, CZ, SMSA, PDAMS}.

The energy functional $E$ associated with problem \eqref{0NL} is
\begin{equation}\label{EEEE}
E(u)=\frac{1}{2}\int_{\mathbb{R}^N}|\nabla u|^2dx
-\frac{1}{2}\int_{\mathbb{R}^N}|u|^2\log |u|^2dx.
\end{equation}
Unfortunately, because of the singularity of the logarithm at the origin, 
the functional fails to be finite as well of class $C^{1}$ on 
$H^{1}(\mathbb{R}^N)$. Because of  this  loss  of  smoothness,
it is convenient  to work in a suitable Banach space endowed with a 
Luxemburg type norm  to make functional $E$  well defined and $C^{1}$ smooth. 
This space allow to control the singularity of the logarithmic
nonlinearity at infinity and at the origin. Indeed, we consider the reflexive 
Banach space (see Appendix below)
\begin{equation}\label{ASE}
W(\mathbb{R}^N)=\big\{u\in H^{1}(\mathbb{R}^N):|u|^2\log |u|^2
\in L^{1}(\mathbb{R}^N)\big\},
\end{equation}
then it is well known that the energy functional $E$ is well-defined and of 
class $C^1$ on $W({\mathbb R}^N)$ (see \cite{CL}).
Moreover, Cazenave \cite[Theorem 9.3.4]{CB} proved the global well-posedness 
of the Cauchy problem for \eqref{0NL} in the energy space $W({\mathbb R}^N)$.

\begin{proposition} \label{PCS}
For each $u_0\in {W}({\mathbb{R}}^N)$, there is a unique maximal solution
$u$ of equation \eqref{0NL} such that 
$u\in C(\mathbb{R},{W}({\mathbb{R}^N}))\cap C^{1}(\mathbb{R}, {W}'
({\mathbb{R}^N}))$, $u(0)=u_0$ and 
$\sup_{t\in \mathbb{R}}\|u(t)\|_{{W}({\mathbb{R}^N})}<\infty$. 
Furthermore, the conservation of energy and charge hold; that is,
\begin{equation*}
E(u(t))=E(u_0)\quad  \text{and} \quad
\|u(t)\|^2_{L^2}=\|u_0\|^2_{L^2}\quad  \text{for all $t\in \mathbb{R}$}.
\end{equation*}
\end{proposition}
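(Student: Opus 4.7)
The plan is to reproduce the argument of Cazenave--Haraux \cite{TA}, which Cazenave recasts in \cite[Theorem 9.3.4]{CB} to reach the stated conclusion. The chief difficulty is that the nonlinearity $f(z):=z\log|z|^{2}$ fails to be Lipschitz continuous on any neighbourhood of the origin, so neither a standard contraction argument in $H^{1}(\mathbb{R}^N)$ nor a direct Strichartz scheme applies.

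First I would regularize. Introduce a sequence $f_m\in C^{1}(\mathbb{C};\mathbb{C})$ that is gauge-invariant, globally Lipschitz on $H^{1}(\mathbb{R}^N)$, and converges to $f$ pointwise as $m\to\infty$ (a convenient choice truncates the logarithm both near $0$ and at infinity). For each $m$, Kato's theory for semilinear Schr\"odinger equations with Lipschitz nonlinearity yields a unique global solution $u_m\in C(\mathbb{R},H^{1}(\mathbb{R}^N))\cap C^{1}(\mathbb{R},H^{-1}(\mathbb{R}^N))$ of $i\partial_tu_m+\Delta u_m+f_m(u_m)=0$ with datum $u_0$, and the regularized energy $E_m$ and the $L^{2}$ mass are conserved along the flow.

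Next I would extract uniform bounds. The mass conservation fixes $\|u_m(t)\|_{L^{2}}$, and the elementary estimate $|s\log s|\leq C_\varepsilon(s^{1-\varepsilon}+s^{1+\varepsilon})$ (used with $\varepsilon$ so small that both exponents sit in the Sobolev range) controls the negative part of the logarithmic term by $\|u_m\|_{L^{2}}$ and a small fraction of $\|\nabla u_m\|_{L^{2}}^{2}$. Conservation of $E_m$ then produces bounds uniform in $m$ and $t$ for $\|u_m(t)\|_{H^{1}}$ and for $\int|u_m|^{2}\bigl|\log|u_m|^{2}\bigr|\,dx$, hence for $\|u_m(t)\|_{W(\mathbb{R}^N)}$. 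A weak-$\ast$ limit in $L^{\infty}(\mathbb{R},W(\mathbb{R}^N))$, combined with a local Aubin--Lions step (using the bound on $\partial_tu_m$ in $H^{-1}$) to secure strong convergence in $L^{2}_{\mathrm{loc}}$, is enough to identify the limit $u$ as a distributional solution of \eqref{0NL}; continuity $u\in C(\mathbb{R},W(\mathbb{R}^N))$ then follows from a classical lower-semicontinuity argument together with the conservation laws.

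Uniqueness is the main obstacle, and it is handled by the pointwise inequality
\begin{equation*}
\bigl|\mathrm{Im}\bigl[(f(z_1)-f(z_2))\overline{(z_1-z_2)}\bigr]\bigr|\leq 4\,|z_1-z_2|^{2},\qquad z_1,z_2\in\mathbb{C},
\end{equation*}
which is the crucial observation in \cite{TA}. Given two solutions $u,v$ with the same initial datum, testing the equation for $w=u-v$ against $\bar{w}$ in the $W'$--$W$ duality and taking imaginary parts cancels the Laplacian contribution and reduces the comparison to $\tfrac{d}{dt}\|w(t)\|_{L^{2}}^{2}\leq 8\,\|w(t)\|_{L^{2}}^{2}$, so that Gronwall forces $w\equiv 0$. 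Conservation of energy and charge for the limit then follow from the corresponding properties of the $u_m$ and the convergence established above.
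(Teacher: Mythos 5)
The paper does not actually prove Proposition \ref{PCS}: it is quoted from Cazenave \cite[Theorem 9.3.4]{CB}, whose proof (going back to Cazenave--Haraux \cite{TA}) is exactly the scheme you outline --- regularize the nonlinearity, pass to the limit using uniform bounds from the conservation laws, and obtain uniqueness from the pointwise inequality $|\mathrm{Im}[(z_1\log|z_1|^2-z_2\log|z_2|^2)\overline{(z_1-z_2)}]|\leq 4|z_1-z_2|^2$ plus Gronwall. So in structure your proposal is the right reconstruction, and the uniqueness argument in particular is correct as written.

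There is, however, one step that fails as stated: the uniform-bound paragraph. With $s=|u|^2$, the branch $s^{1-\varepsilon}$ of the estimate $|s\log s|\leq C_\varepsilon(s^{1-\varepsilon}+s^{1+\varepsilon})$ produces $|u|^{2-2\varepsilon}$, and the exponent $2-2\varepsilon<2$ does \emph{not} ``sit in the Sobolev range'': $L^{2-2\varepsilon}(\mathbb{R}^N)$ is not controlled by the $H^1$ (or $L^2$) norm on the whole space --- take $u(x)\sim |x|^{-\beta}$ at infinity with $N/2<\beta\leq N/(2-2\varepsilon)$, which lies in $H^1$ but has $|u|^{2-2\varepsilon}\notin L^1$. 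This is precisely why the energy fails to be finite on all of $H^1(\mathbb{R}^N)$ and why the space $W(\mathbb{R}^N)$ exists at all, so you cannot bound the region $\{|u_m|<1\}$ this way. The repair is standard and is what the paper's Appendix is set up for: only the $s^{1+\varepsilon}$ branch is needed, applied to the positive part $\int_{\{|u_m|>1\}}|u_m|^2\log|u_m|^2\,dx$, which by Gagliardo--Nirenberg is at most $\delta\|\nabla u_m\|_{L^2}^2+C$; conservation of $E_m$ then yields the uniform $H^1$ bound. The control of $\int |u_m|^2\bigl|\log|u_m|^2\bigr|dx$ (equivalently of $\int A(|u_m|)\,dx$ in the decomposition \eqref{IFD}) is then recovered \emph{after the fact} from the identity $\int A(|u_m|)\,dx=\int B(|u_m|)\,dx-\int |u_m|^2\log|u_m|^2\,dx$, since $\int B(|u_m|)\,dx$ is bounded via \eqref{DB} by the $H^1$ bound and $\int |u_m|^2\log|u_m|^2\,dx=\|\nabla u_m\|_{L^2}^2-2E_m(u_m)$ is bounded by the conservation laws. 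With that substitution your sketch matches the cited proof; the remaining limit-passage and continuity steps are routine as you describe them.
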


Let $\omega\in \mathbb{R}$ and $\varphi\in {W} ({{\mathbb{R}^N}})$
be solutions of the semilinear elliptic equation
\begin{equation}\label{EP}
-\Delta \varphi+\omega \varphi-\varphi\, \log |\varphi |^2=0, \quad
x\in {\mathbb{R}}^N,
\end{equation}
then, $u(x,t)=e^{i\omega t}\varphi(x)$ is a standing wave of \eqref{0NL}.  
It is well known (see \cite{CAS}) that the Gausson
\begin{equation}\label{IGS1}
\phi_{\omega}(x):=e^{\frac{\omega+N}{2}}e^{-\frac{1}{2}|x|^2},
\quad x\in {\mathbb{R}}^N,
\end{equation}
solves \eqref{EP} for any dimension $N$. Up to translations, 
\eqref{IGS1} is the unique strictly positive $C^2$-solution for \eqref{EP}
such that $\varphi(x)\to 0$ as $|x|\to \infty$.
Moreover, it is nondegenerate; that is, the dimension of the nullspace 
of the linearized operator is $N$, i.e. smallest possible (see \cite{PMS}).

The orbital stability of the Gausson \eqref{IGS1} when $N\geq2$ has been 
studied in \cite{PHBJ,PHST,CL}. In particular, Cazenave \cite{CL} proved 
that $e^{i\omega t}\phi_{\omega}(x)$ is stable in ${W} ({{\mathbb{R}^N}})$,
with respect to radial perturbations, for $N\geq2$. Their argument is based 
on the fact that the space of radially symmetric functions in 
${W}({{\mathbb{R}^N}})$ is compactly embedded into $L^2({{\mathbb{R}^N}})$ 
for $N\geq2$. Other proof, for $N\geq3$ and under radial perturbations, was 
given in \cite{PHBJ, PHST}. This proof  relies on application of the Shatah 
formalism \cite{SHH}.

As we have mentioned, Cazenave and Lions \cite[Remark II.3]{CALO}  claimed 
that the Gausson \eqref{IGS1} is orbitally stable in the unrestricted space 
${W} ({{\mathbb{R}^N}})$  for all $N\geq1$, but there the proof is omitted. 
The main aim of this paper is to give a detailed proof of this fact.

The notions of stability and instability are defined as follows.

\begin{definition}\label{2D111} \rm
We say that  a standing wave solution $u(x,t)=e^{i\omega t}\phi(x)$ of \eqref{0NL} 
is orbitally stable in ${W} ({{\mathbb{R}^N}})$ if for any  $\epsilon>0$ 
there exist $\eta>0$  such that if $u_0\in {W} ({{\mathbb{R}^N}})$ and
$\|u_0-\varphi \|_{{W} ({{\mathbb{R}^N}})}<\eta$, then the solution $u(t)$
of  \eqref{0NL}  with $u(0)=u_0$ exist for all $t\in \mathbb R$ and satisfies
\begin{equation*}
\sup_{t\in \mathbb{R}}\inf_{\theta\in \mathbb{R}}
\inf_{y\in \mathbb{R}^N}  \|u(t)-e^{i\theta}\varphi(\cdot- y) \|_{W(\mathbb{R}^N)}
<\epsilon.
\end{equation*}
Otherwise, the standing wave $e^{i\omega t}\phi(x)$ is said to be  unstable in 
${W} ({{\mathbb{R}^N}})$.
\end{definition}

 Before we state our result, we establish a variational characterization of the
 Gausson \eqref{IGS1}.
For $\omega\in \mathbb{R}$, we define the following functionals of class 
$C^{1}$ on $W(\mathbb{R}^N)$:
\begin{gather*}
 S_{\omega}(u)=\frac{1}{2}\int_{\mathbb{R}^N}|\nabla u|^2dx
 +\frac{\omega+1}{2}\int_{\mathbb{R}^N}|u|^2dx
 -\frac{1}{2}\int_{\mathbb{R}^N}|u|^2\log |u|^2dx,\\
  I_{\omega}(u)=\int_{\mathbb{R}^N}|\nabla u|^2dx
 +\omega\int_{\mathbb{R}^N}|u|^2dx-\int_{\mathbb{R}^N}|u|^2\log |u|^2dx.
\end{gather*}
Note that \eqref{EP} is equivalent to $S'_{\omega}(\varphi)=0$, and 
$I_{\omega}(u)=\langle S_{\omega}'(u),u\rangle$ is the so-called Nehari
functional.

Moreover, we consider the minimization problem
\begin{equation}
\begin{split} \label{MPE}
d(\omega)
&={\inf}\{S_{\omega}(u):\, u\in W(\mathbb{R}^N) \setminus  \{0 \},
  I_{\omega}(u)=0\} \\
&=\frac{1}{2}\,{\inf}\{\|u\|_{L^2}^2:u\in  W(\mathbb{R}^N) \setminus \{0 \},
  I_{\omega}(u)= 0 \},
\end{split}
\end{equation}
and define the set of ground states  by
\begin{equation*}
 \mathcal{N}_{\omega}=\bigl\{ \varphi\in W(\mathbb{R}^N) \setminus  \{0 \}
: S_{\omega}(\varphi)=d(\omega), \,\, I_{\omega}(\varphi)=0\bigl\}.
\end{equation*}
The set $\bigl\{u\in W(\mathbb{R}^N)\setminus  \{0 \},  I_{\omega}(u)=0\bigl\}$
is called the Nehari manifold. Notice that  the above set contains all
stationary point of $S_{\omega}$. In Section \ref{S:0}, we show that the
quantity $d(\omega)$ is positive for every $\omega\in \mathbb{R}$. Indeed,
\begin{equation*}
d(\omega)=\frac{1}{2}{\pi}^{N/2}e^{\omega+ N}.
\end{equation*}

\begin{remark}\label{RM} \rm
Let $u\in \mathcal{N}_{\omega}$.  Then, there exist a Lagrange multiplier  
$\Lambda\in \mathbb{R}$ such that $S'_{\omega}(u)=\Lambda I'_{\omega}(u)$. 
Thus, we have $\langle S'_{\omega}( u),u\rangle
=\Lambda\langle  I'_{\omega}(u),u\rangle$. The fact that   
$\langle S'_{\omega}( u),u\rangle =I_{\omega}(u)=0$ and 
$\langle  I'_{\omega}(u),u\rangle= -2\|u_{}\|_{L^2}^2<0$, implies 
$\Lambda=0$; that is, $S_{\omega}'(u)=0$. Therefore,  $u$ satisfies \eqref{EP}.
\end{remark}

The existence of minimizers for the minimization problem \eqref{MPE} is proved 
by the standard variational argument. We will show the following proposition 
in the Section \ref{S:0}.

\begin{proposition} \label{ESSW}
There exists a minimizer of $d(\omega)$ for any $\omega \in \mathbb{R}$. 
 Moreover, the set of ground states is  given by 
$\mathcal{N}_{\omega}=\{e^{i\theta}\phi_{\omega}(.-y); 
\theta\in\mathbb{R}, y\in\mathbb{R}^N \}$, where $\phi_{\omega}$ 
is  given in \eqref{IGS1}.
\end{proposition}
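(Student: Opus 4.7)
The plan is to compute $d(\omega)$ explicitly and then classify all minimizers, using the sharp logarithmic Sobolev inequality of Gross together with the diamagnetic inequality as the principal analytic inputs. For any real $v\in H^1(\mathbb{R}^N)\setminus\{0\}$,
\[
\int_{\mathbb{R}^N}v^2\log v^2\,dx
\leq \frac{a^2}{\pi}\|\nabla v\|_{L^2}^2
+\bigl(\log\|v\|_{L^2}^2-N(1+\log a)\bigr)\|v\|_{L^2}^2
\]
holds for every $a>0$, with equality precisely when $v$ is a (translated) Gaussian of width matched to $a$. Applying this to $v=|u|$ for complex $u\in W(\mathbb{R}^N)\setminus\{0\}$ and combining with $\|\nabla|u|\|_{L^2}\leq\|\nabla u\|_{L^2}$ gives a complex version. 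The choice $a^2=\pi$ is tailored so that the gradient terms cancel against the Nehari identity $\|\nabla u\|_{L^2}^2-\int|u|^2\log|u|^2\,dx=-\omega\|u\|_{L^2}^2$ valid whenever $I_\omega(u)=0$.

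First I would check that $\phi_\omega\in\mathcal{N}_\omega$: testing \eqref{EP} against $\phi_\omega$ shows $I_\omega(\phi_\omega)=0$, and a Gaussian integral gives $\|\phi_\omega\|_{L^2}^2=\pi^{N/2}e^{\omega+N}$, so by the second expression in \eqref{MPE}, $d(\omega)\leq\tfrac{1}{2}\pi^{N/2}e^{\omega+N}$. For the reverse inequality, apply the complex log-Sobolev inequality at $a^2=\pi$ to an arbitrary admissible $u$ with $I_\omega(u)=0$; after cancelling the gradient terms using the Nehari identity, the inequality rearranges to $\log\|u\|_{L^2}^2\geq\omega+N+\tfrac{N}{2}\log\pi$, hence $\|u\|_{L^2}^2\geq\pi^{N/2}e^{\omega+N}$. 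This yields simultaneously $d(\omega)=\tfrac{1}{2}\pi^{N/2}e^{\omega+N}$ and the existence of a minimizer, namely the Gausson itself.

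To classify $\mathcal{N}_\omega$, let $u$ be a minimizer. Both inequalities feeding the lower bound must be saturated for $u$. The equality case of the real log-Sobolev inequality applied to $|u|$ forces $|u|(x)=A e^{-|x-y|^2/2}$ for some $A>0$ and $y\in\mathbb{R}^N$; in particular $|u|>0$ everywhere. The equality case of the diamagnetic inequality, combined with $|u|>0$, then forces $u=e^{i\theta}|u|$ for some constant phase $\theta\in\mathbb{R}$. Finally, Remark~\ref{RM} gives that $u$ itself solves \eqref{EP}; plugging the Gaussian ansatz into \eqref{EP} and matching the constant and $|x|^2$ coefficients determines $A=e^{(\omega+N)/2}$, so $u=e^{i\theta}\phi_\omega(\cdot-y)$, as required.

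The most delicate point is the equality analysis—specifically the dual use of the extremizer characterization of the Gross inequality together with the equality case of the diamagnetic inequality for complex-valued minimizers. A conceptually simpler but analytically heavier alternative would avoid the equality case of the log-Sobolev inequality and instead invoke the uniqueness of strictly positive $C^2$ solutions of \eqref{EP} cited in the introduction; this would however require a separate positivity and regularity argument for $|u|$ as a weak solution of a PDE whose nonlinearity is not Lipschitz at $0$.
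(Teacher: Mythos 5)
Your proof is correct, but its existence half takes a genuinely different---and shorter---route than the paper's. The paper extracts a minimizer from an arbitrary minimizing sequence by a concentration-compactness argument: boundedness in $W(\mathbb{R}^N)$ via the logarithmic Sobolev inequality, translations to prevent vanishing (using Lemma~\ref{L5}), a Br\'ezis--Lieb splitting (Lemma~\ref{L4}) to rule out both $I_{\omega}(\varphi)<0$ and $I_{\omega}(\varphi)>0$ for the weak limit, and weak lower semicontinuity of the $L^2$-norm to conclude. You bypass all of this: the inequality of Lemma~\ref{L1} at $\alpha=\sqrt{\pi}$ combined with the Nehari identity gives $\|u\|_{L^2}^2\geq \pi^{N/2}e^{\omega+N}$ on the constraint set (this is exactly the paper's Lemma~\ref{L2}), and testing with $\phi_{\omega}$ shows the bound is attained, so $d(\omega)=\frac{1}{2}\pi^{N/2}e^{\omega+N}$ and the Gausson itself is a minimizer, with no compactness needed. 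The classification halves essentially coincide: both force equality in \eqref{SII} at $\alpha=\sqrt{\pi}$; the paper invokes the equality case of Lemma~\ref{L1} directly for complex $f$ (concluding $\varphi=re^{i\theta_0}e^{-|x-y|^2/2}$ at once), while you make the phase analysis explicit by splitting into the real equality case for $|u|$ plus equality in the diamagnetic inequality $\|\nabla|u|\|_{L^2}\leq\|\nabla u\|_{L^2}$---a correct and slightly more self-contained rendering of the same fact, valid since $|u|$ is a strictly positive Gaussian so the local phase is well defined and constant on the connected set $\mathbb{R}^N$. Your determination of the amplitude via Remark~\ref{RM} and \eqref{EP} is an acceptable substitute for the paper's one-line computation from $\|\varphi\|_{L^2}^2=2d(\omega)$. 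One caveat concerns context rather than correctness: the paper's heavier existence argument is not wasted, because its concentration-compactness content is precisely what underlies Lemma~\ref{CSM} (strong convergence in $W(\mathbb{R}^N)$, up to translations, of \emph{arbitrary} minimizing sequences), which is the engine of the stability proof of Theorem~\ref{2ESSW}; your argument proves Proposition~\ref{ESSW} as stated, but if it replaced the paper's proof wholesale, the machinery supporting Lemma~\ref{CSM} would have to be developed elsewhere.
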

We remark that  Proposition \ref{ESSW} was claimed without proof by 
Cazenave \cite[Remark 9.3.8]{CB}.  It is also important to note  that 
the ground state be unique up to translations and phase shifts. 
In higher dimensions, it is known that there exist infinitely many weak 
solutions $u_n\in H^{1}(\mathbb{R}^N)$ of \eqref{EP} such that
$S_{\omega}(u_n)\to +\infty$ as $n\to +\infty$ (see e.g. \cite[Theorem 1.1]{PMS}).
The variational characterization of the Gausson \eqref{IGS1} as a minimizer of 
$S_{\omega}$ on the Nehari manifold,  contained in Proposition \ref{ESSW}, 
will be useful when we will deal with the stability.

Now we state our main result of this paper.

\begin{theorem} \label{2ESSW}
Let $\omega \in \mathbb{R}$ and $N\geq1$. Then  the standing wave 
$e^{i\omega t}\phi_{\omega}(x)$ is orbitally stable in
$W(\mathbb{R}^N)$.
\end{theorem}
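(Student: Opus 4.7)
The plan is a contradiction argument following the Cazenave--Lions scheme, with Lions' concentration-compactness principle replacing the radial compact embedding used in \cite{CL}. Suppose stability fails; then there exist $\varepsilon_0>0$, data $u_{0,n}\to\phi_\omega$ in $W(\mathbb{R}^N)$, and times $t_n\in\mathbb{R}$ such that $v_n:=u_n(t_n)$ satisfies $\inf_{\theta,y}\|v_n-e^{i\theta}\phi_\omega(\cdot-y)\|_{W(\mathbb{R}^N)}\ge\varepsilon_0$.

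First I would use Proposition \ref{PCS} together with the identity $S_\omega=E+\tfrac{\omega+1}{2}\|\cdot\|_{L^2}^2$ to deduce $\|v_n\|_{L^2}^2\to\|\phi_\omega\|_{L^2}^2=2d(\omega)$ and $S_\omega(v_n)\to S_\omega(\phi_\omega)=d(\omega)$. Since $I_\omega(u)=2S_\omega(u)-\|u\|_{L^2}^2$, this forces $I_\omega(v_n)\to 0$. A direct scaling computation gives $I_\omega(\lambda v)=\lambda^2 I_\omega(v)-2\lambda^2\|v\|_{L^2}^2\log\lambda$, so there is a unique $\lambda_n>0$ with $I_\omega(\lambda_n v_n)=0$, and necessarily $\lambda_n\to 1$. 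Setting $w_n:=\lambda_n v_n$, the sequence $\{w_n\}$ lies on the Nehari manifold and satisfies $S_\omega(w_n)=\tfrac12\|w_n\|_{L^2}^2\to d(\omega)$, i.e.\ it is a minimizing sequence for $d(\omega)$.

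Second, I would prove $\{w_n\}$ is bounded in $H^1(\mathbb{R}^N)$ by combining the Nehari identity with the logarithmic Sobolev inequality (in the form carrying a free coefficient $a^2/\pi$ in front of the gradient term, which for $a<\sqrt\pi$ allows absorption), and then apply Lions' concentration-compactness principle to the measures $|w_n|^2\,dx$. Vanishing is ruled out as follows: if $\sup_y\int_{B(y,1)}|w_n|^2\to 0$, then $w_n\to 0$ in $L^q$ for each $q\in(2,2^*)$, and the elementary bound $\log t\le\delta^{-1}t^\delta$ for $t\ge 1$ forces $\int_{|w_n|>1}|w_n|^2\log|w_n|^2\to 0$; combined with the Nehari identity and the log-Sobolev inequality applied on $\{|w_n|\le 1\}$, this contradicts $\|w_n\|_{L^2}^2\to 2d(\omega)>0$. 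Dichotomy is ruled out by the strict subadditivity of the mass-constrained functional $m(\mu):=\inf\{E(u):\|u\|_{L^2}^2=\mu\}$, a consequence of the strict convexity of $x\mapsto x\log x$ on $(0,\infty)$. Consequently, along a subsequence there exist translations $y_n\in\mathbb{R}^N$ such that $w_n(\cdot+y_n)$ converges strongly in $L^2$ to a nonzero limit $\tilde w$.

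Finally, I would upgrade this strong $L^2$ convergence to strong convergence in $W(\mathbb{R}^N)$ via a Brezis--Lieb-type decomposition of the logarithmic nonlinearity together with the lower semicontinuity of the gradient; then $\tilde w$ attains $d(\omega)$. By Proposition \ref{ESSW}, $\tilde w=e^{i\theta}\phi_\omega$ for some $\theta\in\mathbb{R}$, and since $\lambda_n\to 1$ also $v_n(\cdot+y_n)\to e^{i\theta}\phi_\omega$ in $W$, contradicting the standing hypothesis. The main obstacle is the concentration-compactness step: in the radial setting of \cite{CL} the compact embedding $W_{\mathrm{rad}}\hookrightarrow L^2$ does the work almost directly, whereas here one must rule out vanishing and dichotomy for a nonlinearity that is not homogeneous but only superlinear with logarithmic growth; the log-Sobolev inequality, whose extremals are precisely the Gaussians, plays here the role that Gagliardo--Nirenberg plays for power-type NLS.
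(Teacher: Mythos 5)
Your proposal is correct in its overall architecture and coincides with the paper's proof in its first and last stages: the contradiction setup, the use of Proposition \ref{PCS} and the identity $S_\omega=E+\frac{\omega+1}{2}\|\cdot\|_{L^2}^2$ to get $S_\omega(v_n)\to d(\omega)$ and $I_\omega(v_n)\to 0$, the multiplicative rescaling to restore the Nehari constraint (your $\lambda_n$ is exactly the paper's $\rho_n=\exp\bigl(I_\omega(v_n)/2\|v_n\|_{L^2}^2\bigr)$, both exploiting the identity $I_\omega(\lambda u)=\lambda^2 I_\omega(u)-2\lambda^2\|u\|_{L^2}^2\log\lambda$), and the final upgrade from strong $L^2$ convergence to strong $W(\mathbb{R}^N)$ convergence, which is precisely the content of the paper's Lemma \ref{CSM} (via the estimate \eqref{DB} on $B$, the constraint $I_\omega=0$, weak lower semicontinuity of $\|\nabla\cdot\|_{L^2}^2$ and $\int A$, and Proposition \ref{orlicz}(ii)). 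Where you genuinely diverge is the compactness step. You invoke the full Lions concentration-compactness alternative and kill dichotomy through strict subadditivity of the mass-constrained level $m(\mu)=\inf\{E(u):\|u\|_{L^2}^2=\mu\}$ — this is the original Cazenave--Lions route from \cite{CALO}, and the subadditivity computation via strict convexity of $x\mapsto x\log x$ is sound because the exact scaling $E(\lambda u)=\lambda^2E(u)-\lambda^2\|u\|_{L^2}^2\log\lambda$ makes $m(\theta\mu)=\theta m(\mu)-\frac{\theta\mu}{2}\log\theta$. The paper instead never confronts dichotomy as such: after excluding vanishing (Lemma \ref{L5}), it takes a weak limit $\varphi$ and runs a Br\'ezis--Lieb splitting (Lemma \ref{L4} together with \eqref{2C11}--\eqref{2C12}) on the Nehari functional itself, ruling out $I_\omega(\varphi)<0$ and $I_\omega(\varphi)>0$ separately by rescaling the limit or the remainder $v_n-\varphi$; the logarithmic scaling identity substitutes for subadditivity. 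Your route buys conceptual continuity with \cite{CALO}; the paper's buys a shorter, self-contained argument that stays on one constraint set throughout.

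Two points in your sketch need explicit justification to be complete. First, to apply subadditivity of $m$ to the sequence $\{w_n\}$ you must know it is also a minimizing sequence for the mass-constrained problem, i.e.\ that $E(w_n)\to m(2d(\omega))$. Since $I_\omega(w_n)=0$ and $\|w_n\|_{L^2}^2\to 2d(\omega)$ give $E(w_n)\to-\omega\,d(\omega)$, you need the identity $m(2d(\omega))=-\omega\,d(\omega)$; this does hold, and follows from the logarithmic Sobolev inequality \eqref{SII} with $\alpha=\sqrt{\pi}$ (whose case of equality simultaneously identifies the constrained minimizer as the Gausson), but as written you pass between the two variational problems without stating this bridge. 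Second, your vanishing exclusion is slightly misstated: the log-Sobolev inequality gives an \emph{upper} bound on $\int|w_n|^2\log|w_n|^2$ and so cannot by itself produce the contradiction on $\{|w_n|\le 1\}$ when $\omega<0$. The clean elementary argument is that vanishing forces $w_n\to 0$ in $L^q$ for $q\in(2,2^\ast)$, hence $\int_{\{|w_n|>\epsilon\}}|w_n|^2\to 0$, so almost all the (nonvanishing) mass lives where $|w_n|\le\epsilon$, whence $-\int_{\{|w_n|\le1\}}|w_n|^2\log|w_n|^2\ge \log(\epsilon^{-2})\bigl(2d(\omega)-o(1)\bigr)\to\infty$, contradicting the Nehari identity $\|\nabla w_n\|_{L^2}^2+\omega\|w_n\|_{L^2}^2=\int|w_n|^2\log|w_n|^2$ with bounded left-hand side; this is in substance the paper's Lemma \ref{L5} (quoted from \cite[Lemma 3.3]{CL}). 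With these two repairs your proof goes through.
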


The rest of the article is organized as follows. 
In Section \ref{S:0} we prove, by variational techniques, the existence 
of a minimizer for $d(\omega)$ (Proposition \ref{ESSW}). 
Section \ref{S:3} is devoted to the proof of Theorem \ref{2ESSW}. 
In the Appendix we include some information about of the space
 $W ({\mathbb{R}}^N)$.
\smallskip

\noindent{\bf Notation.} The space $L^2(\mathbb{R}^N,\mathbb{C})$  
will be denoted  by $L^2(\mathbb{R}^N)$ and its norm by $\|\cdot\|_{L^2}$.  
This space will be endowed  with the real scalar product
\begin{equation*}
 (u,v)=\Re\int_{\mathbb{R}^N}u\overline{v}\, dx, \quad 
\mathrm{for }\ u,v\in L^2(\mathbb{R}^N).
\end{equation*}
The space $H^{1}(\mathbb{R}^N,\mathbb{C})$  will be denoted  by 
$H^{1}(\mathbb{R}^N)$ and its norm by $\|\cdot\|_{H^{1}(\mathbb{R}^N)}$.
$\langle \cdot , \cdot \rangle$ is the duality pairing between $X'$ and $X$,
 where $X$ is a Banach space and $X'$ is its dual. Finally, 
$2^{\ast}:=2N/(N-2)$ if $N\geq3$ and $2^{\ast}:=+\infty$ if $N=1$ or $N=2$.
 Throughout this paper, the letter $C$ will denote positive constants.

\section{Existence and uniqueness of ground state}
\label{S:0}

Before giving the proof of Proposition \ref{ESSW}, some preparation is necessary. 
First, we recall the logarithmic Sobolev inequality. For a proof we refer to
 \cite[Theorem 8.14]{ELL}.

\begin{lemma} \label{L1}
Let $f$ be any function in $ H^{1}(\mathbb{R}^N)\setminus\{0\}$ and $\alpha$ 
be any positive number. Then,
\begin{equation}\label{SII}
\begin{aligned}
&\int_{\mathbb{R}^N}|f(x)|^2\log |f(x)|^2dx \\
&\leq \frac{\alpha^2}{\pi} \|\nabla f \|^2_{L^2}
 +(\log  \|f \|^2_{L^2}-N(1+\log \,\alpha))\|f \|^2_{L^2}.
\end{aligned}
\end{equation}
Moreover, there is equality if and only if $f$ is, up to translation, 
a multiple of  $e^{\{-\pi |x|^2/2 \alpha^2\}}$.
\end{lemma}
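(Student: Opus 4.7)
The plan is to derive the stated Euclidean logarithmic Sobolev inequality from Gross's Gaussian logarithmic Sobolev inequality by two successive reductions (normalization and scaling) followed by a Gaussian change of variable.

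First, since both sides of \eqref{SII} pick up the same additive term $\|f\|_{L^2}^2\log c^2$ under $f\mapsto cf$, one may normalize $\|f\|_{L^2}=1$. Then apply the dilation $f_\lambda(x)=\lambda^{N/2}f(\lambda x)$, which preserves the $L^2$ norm and satisfies
\[
\|\nabla f_\lambda\|_{L^2}^2=\lambda^2\|\nabla f\|_{L^2}^2,\qquad \int|f_\lambda|^2\log|f_\lambda|^2\,dx=N\log\lambda+\int|f|^2\log|f|^2\,dx.
\]
A direct substitution shows that \eqref{SII} applied to $f_\lambda$ with parameter $\alpha$ is equivalent to \eqref{SII} applied to $f$ with parameter $\lambda\alpha$, so it suffices to prove the single case $\alpha=\sqrt{\pi}$, $\|f\|_{L^2}=1$, namely
\[
\int|f|^2\log|f|^2\,dx\leq\|\nabla f\|_{L^2}^2-\tfrac{N}{2}\log\pi-N.
\]

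Second, I would substitute $f=Gu$ with $G(x)=\pi^{-N/4}e^{-|x|^2/2}$ the $L^2$-normalized Gaussian, so that $d\mu:=G^2\,dx$ is a probability measure. Using $\nabla G=-xG$, expanding $|\nabla f|^2=G^2|\nabla u|^2+2Gu\,\nabla G\cdot\nabla u+u^2|\nabla G|^2$, and integrating the cross term by parts (with $\nabla\cdot(xG^2)=(N-2|x|^2)G^2$) yields
\[
\int|\nabla f|^2\,dx=\int|\nabla u|^2\,d\mu+\int(N-|x|^2)u^2\,d\mu.
\]
Combined with $\log|f|^2=-\tfrac{N}{2}\log\pi-|x|^2+\log u^2$, the normalized inequality collapses (using $\int u^2\,d\mu=1$) into the Gaussian logarithmic Sobolev inequality
\[
\int u^2\log u^2\,d\mu\leq\int|\nabla u|^2\,d\mu,
\]
which is Gross's inequality for the Gaussian measure of variance $1/2$, equivalent to the standard form via a rescaling of the variable.

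Third, I would invoke Gross's inequality. Its proof via the Ornstein--Uhlenbeck semigroup $P_t=e^{tL}$ with $L=\Delta-x\cdot\nabla$ is classical: differentiating the entropy $t\mapsto\int(P_t h)\log(P_t h)\,d\mu$ for $h=u^2$ and using the Bakry--\'Emery identity $\int(Lv)^2\,d\mu=\int|\nabla v|^2\,d\mu+\int\|\mathrm{Hess}\,v\|^2\,d\mu$ yields both the inequality and the rigidity. Equality in Gross forces $\mathrm{Hess}\,v\equiv 0$, hence $u$ constant $\mu$-a.e.; pulling this back through $f=Gu$ gives $f$ proportional to $G$, and undoing the dilation together with the translation invariance of the Euclidean inequality shows \eqref{SII} is an equality iff $f$ is, up to translation, a multiple of $e^{-\pi|x|^2/(2\alpha^2)}$. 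The main subtlety is tracking the equality characterization faithfully through the normalization, dilation, and Gaussian substitution; the inequality itself is almost mechanical once Gross's inequality is in hand.
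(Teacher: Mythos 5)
The inequality half of your argument is sound, and it is worth noting that the paper itself offers no proof of this lemma at all: it simply cites Lieb and Loss \cite[Theorem 8.14]{ELL}. Your reduction chain is correct as far as the inequality goes: the homogeneity check justifying $\|f\|_{L^2}=1$, the dilation computation showing that the family \eqref{SII} over all $\alpha>0$ collapses to the single case $\alpha=\sqrt{\pi}$, and the substitution $f=Gu$ with $G(x)=\pi^{-N/4}e^{-|x|^2/2}$ (the cross-term integration by parts via $\nabla\cdot(xG^2)=(N-2|x|^2)G^2$ is right, and the $|x|^2$ terms do cancel) all check out, landing exactly on $\int |u|^2\log |u|^2\,d\mu\le\int|\nabla u|^2\,d\mu$ for the variance-$1/2$ Gaussian. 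Two small housekeeping points: since $f$ is complex-valued you should first reduce to $f\ge 0$ via $\bigl|\nabla |f|\bigr|\le|\nabla f|$ a.e.\ (writing $|u|^2$ rather than $u^2$ throughout), and $u=f/G$ need not lie in the Gaussian Sobolev space a priori, so a truncation or density argument is needed; both are routine.

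The genuine error is in your rigidity step: from $\mathrm{Hess}\,v\equiv 0$ you conclude ``$u$ constant,'' but a vanishing Hessian gives $v$ \emph{affine}, $v(x)=a\cdot x+b$, hence $u=c\,e^{a\cdot x}$. This is not a pedantic distinction --- the equality cases of Gross's inequality are exactly these exponentials (Carlen's theorem), not the constants, and your stated conclusion is internally inconsistent with the lemma you are proving: the translate $f(x)=G(x-y)$, $y\ne 0$, is an equality case of \eqref{SII} (both sides of \eqref{SII} are translation invariant) and corresponds to the non-constant $u(x)=e^{x\cdot y-|y|^2/2}$. For the same reason your closing appeal to ``translation invariance of the Euclidean inequality'' cannot repair the \emph{only if} direction: invariance shows translated Gaussians \emph{are} equality cases, but if equality in Gross truly forced $u$ constant, it would simultaneously prove they are not. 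The fix is immediate once the error is identified: affine $v$ gives $u=c\,e^{a\cdot x}$, and completing the square in $f=Gu=c\,\pi^{-N/4}e^{-|x|^2/2+a\cdot x}$ shows $f$ is a multiple of a translated Gaussian, which after undoing the dilation yields precisely the stated equality set. You should also flag that extracting rigidity from the Ornstein--Uhlenbeck entropy-derivative argument requires nontrivial justification (regularity of $P_th$, passing equality through the $t$-integration); Carlen's 1991 proof of the equality cases exists precisely because this step is delicate.
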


\begin{lemma}\label{L2}
Let $\omega\in \mathbb{R}$. Then, the quantity $d(\omega)$ is positive and satisfies
\begin{equation}\label{EA}
d(\omega)\geq \frac{1}{2}{\pi}^{N/2}e^{\omega+N}.
\end{equation}
\end{lemma}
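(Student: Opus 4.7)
The plan is to use the logarithmic Sobolev inequality (Lemma \ref{L1}) applied to any admissible $u$ in the Nehari manifold, tuning the free parameter $\alpha$ so that the gradient term on the right-hand side exactly matches (and thus cancels) the gradient term that the constraint $I_\omega(u) = 0$ produces.

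First, I would take $u \in W(\mathbb{R}^N) \setminus \{0\}$ with $I_\omega(u) = 0$, which is equivalent to
\begin{equation*}
\|\nabla u\|_{L^2}^2 + \omega \|u\|_{L^2}^2 = \int_{\mathbb{R}^N} |u|^2 \log |u|^2 \, dx.
\end{equation*}
Next, I would bound the right-hand side from above by the logarithmic Sobolev inequality \eqref{SII} with the choice $\alpha = \sqrt{\pi}$, so that $\alpha^2/\pi = 1$ and $N(1+\log\alpha) = N + \tfrac{N}{2}\log \pi$. The $\|\nabla u\|_{L^2}^2$ terms then cancel on both sides, leaving
\begin{equation*}
\omega \|u\|_{L^2}^2 \leq \bigl(\log \|u\|_{L^2}^2 - N - \tfrac{N}{2}\log \pi\bigr) \|u\|_{L^2}^2.
\end{equation*}
Dividing by $\|u\|_{L^2}^2 > 0$ and exponentiating yields $\|u\|_{L^2}^2 \geq \pi^{N/2} e^{\omega + N}$.

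Finally, using the second equivalent expression for $d(\omega)$ in \eqref{MPE}, namely $d(\omega) = \tfrac{1}{2}\inf\{\|u\|_{L^2}^2 : I_\omega(u) = 0, \, u \neq 0\}$, I would take the infimum over such $u$ to obtain $d(\omega) \geq \tfrac{1}{2}\pi^{N/2} e^{\omega+N}$, which is \eqref{EA}; positivity follows immediately.

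There is no real obstacle here: the proof is essentially a one-line calibration of $\alpha$ in the log-Sobolev inequality. The only thing to be careful about is the verification that the second formula for $d(\omega)$ in \eqref{MPE} is indeed usable (which it is, by definition, once one notes that for $u$ on the Nehari manifold $S_\omega(u) = \tfrac{1}{2}\|u\|_{L^2}^2$, a computation that uses $I_\omega(u)=0$ to eliminate the logarithmic term from $S_\omega$). The sharpness of the bound, which one expects from the equality case in Lemma \ref{L1} being attained by Gaussians, will be confirmed later when Proposition \ref{ESSW} exhibits the Gausson $\phi_\omega$ as a minimizer.
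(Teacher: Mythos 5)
Your proposal is correct and coincides with the paper's own argument: both apply the logarithmic Sobolev inequality with the calibrated choice $\alpha=\sqrt{\pi}$ to any $u\neq 0$ with $I_\omega(u)=0$, cancel the gradient terms, exponentiate to get $\|u\|_{L^2}^2\geq \pi^{N/2}e^{\omega+N}$, and conclude via the second formula for $d(\omega)$ in \eqref{MPE}. Your added verification that $S_\omega(u)=\tfrac{1}{2}\|u\|_{L^2}^2$ on the Nehari manifold is a correct detail the paper leaves implicit in \eqref{MPE}.
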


\begin{proof}
 Let $u\in W(\mathbb{R}^N)  \setminus  \{0 \}$ be such that  $I_{\omega}(u)=0$. 
Using the logarithmic Sobolev inequality with $\alpha=\sqrt{{\pi}{}}$, 
we see that
\begin{equation*}
(\omega+N(1+\log (\sqrt{\pi})))\|u\|^2_{L^2}\leq (\log \|u\|^2_{L^2})\|u\|^2_{L^2},
\end{equation*}
which implies that  $\|u\|^2_{L^2}\geq {\pi}^{N/2}e^{\omega+N}$. Thus, by the 
definition of $d(\omega)$ given in \eqref{MPE}, we obtain \eqref{EA}.
\end{proof}

The following lemma is a variant of the Br\'ezis-Lieb lemma from \cite{LBL}.

\begin{lemma} \label{L4}
Let  $\{u_n\}$ be a bounded sequence in $W(\mathbb{R}^N)$ such that
$u_n\to u$ a.e. in $\mathbb{R}^N$. Then $u\in W(\mathbb{R}^N)$ and
\begin{equation*}
\lim_{n\to \infty}\int_{\mathbb{R}^N}\{|u_n|^2\log |u_n|^2-|u_n
 -u|^2\log |u_n-u|^2\}dx=\int_{\mathbb{R}^N}|u|^2\log |u|^2dx.
\end{equation*}
\end{lemma}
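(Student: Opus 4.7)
The plan is to first establish $u\in W(\mathbb{R}^N)$ by a Fatou-type argument, then promote the obvious a.e.\ convergence of the integrand to $L^1$ convergence by a Brézis-Lieb-type argument, separating the singular behavior of the logarithm at $0$ from its superquadratic growth at infinity.

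\emph{Step 1: $u\in W(\mathbb{R}^N)$.} Since $\{u_n\}$ is bounded in $H^{1}(\mathbb{R}^N)$ and $u_n\to u$ a.e., Fatou applied to $|\nabla u_n|^2+|u_n|^2$ gives $u\in H^{1}(\mathbb{R}^N)$. Boundedness of $\{u_n\}$ in the Luxemburg-type norm of $W(\mathbb{R}^N)$ implies $\sup_n\int_{\mathbb{R}^N}|u_n|^2\bigl|\log|u_n|^2\bigr|\,dx<\infty$. Since $z\mapsto |z|^2|\log|z|^2|$ is continuous on $\mathbb{C}$ with value $0$ at $z=0$, we have $|u_n|^2|\log|u_n|^2|\to |u|^2|\log|u|^2|$ a.e., and Fatou yields $u\in W(\mathbb{R}^N)$.

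\emph{Step 2: Pointwise limit and decomposition.} Write $F(z):=|z|^2\log|z|^2$ and $v_n:=u_n-u$. The integrand $G_n:=F(u_n)-F(v_n)-F(u)$ converges to $0$ almost everywhere, since $v_n\to 0$ a.e.\ and $F$ is continuous on $\mathbb{C}$ with $F(0)=0$. To promote this to $L^1$ convergence I would split $F=F_s+F_\ell$ via a smooth partition of unity in $|z|$, with $F_s$ supported in $\{|z|\leq 1\}$ (capturing the logarithmic singularity near $0$) and $F_\ell$ supported in $\{|z|\geq 1/2\}$ (capturing the superquadratic growth at infinity). For every small $\eta\in(0,1)$ one has
\[
|F_s(z)|\leq C_\eta|z|^{2-\eta},\qquad |F_\ell(z)|\leq C_\eta|z|^{2+\eta},
\]
together with analogous gradient bounds $|\nabla F_s(z)|\leq C_\eta|z|^{1-\eta}$ and $|\nabla F_\ell(z)|\leq C_\eta|z|^{1+\eta}$.

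\emph{Step 3: Generalized Brézis--Lieb applied to each piece.} For $F_\ell$, the mean value theorem and Young's inequality supply, for each $\epsilon>0$,
\[
|F_\ell(a+b)-F_\ell(a)-F_\ell(b)|\leq \epsilon|a|^{2+\eta}+C_\epsilon|b|^{2+\eta}.
\]
Choosing $\eta$ small enough that $2+\eta<2^{\ast}$ and using the Gagliardo--Nirenberg inequality together with the $H^1$ bound gives $\sup_n\|u_n\|_{L^{2+\eta}}<\infty$. The generalized Brézis--Lieb lemma of \cite{LBL} then yields $\int_{\mathbb{R}^N}[F_\ell(u_n)-F_\ell(v_n)-F_\ell(u)]\,dx\to 0$. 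For $F_s$, which is bounded and Lipschitz with support in $\{|z|\leq 1\}$, one derives an analogous pointwise inequality with exponent $2-\eta$,
\[
|F_s(a+b)-F_s(a)-F_s(b)|\leq \epsilon|a|^{2}+C_\epsilon(|b|^{2-\eta}+|b|^{2}),
\]
exploiting that $F_s$ vanishes outside a bounded set. Combined with the $L^2$ bound on $\{u_n\}$, the a.e.\ convergence $v_n\to 0$, and the information $u\in W(\mathbb{R}^N)$ obtained in Step~1, Vitali's theorem (or a second application of \cite{LBL}) gives $\int_{\mathbb{R}^N}[F_s(u_n)-F_s(v_n)-F_s(u)]\,dx\to 0$. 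Adding the two contributions proves the lemma.

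\emph{Main obstacle.} The hard part is producing a pointwise bound on $|F(a+b)-F(a)-F(b)|$ that simultaneously tames the non-Lipschitz logarithmic singularity at $0$ and the superquadratic growth at infinity, of the kind demanded by the generalized Brézis--Lieb lemma. The splitting $F=F_s+F_\ell$ is tailored precisely to decouple these two difficulties into two polynomial-type problems, each directly accessible to \cite{LBL}. A minor technical point is to choose $\eta>0$ below $2^{\ast}-2$, which is harmless since $\eta$ may be taken arbitrarily small (and the restriction disappears when $N=1,2$).
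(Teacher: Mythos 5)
Your Step 1 and your treatment of the superquadratic piece $F_\ell$ are sound: the mean-value/Young inequality with exponent $2+\eta<2^{\ast}$, the Gagliardo--Nirenberg bound $\sup_n\|u_n\|_{L^{2+\eta}}<\infty$, and the general form of the Br\'ezis--Lieb lemma do dispose of the growth at infinity (the paper handles this piece instead via Cazenave's estimate \eqref{DB}, but your route works). The genuine gap is in the singular piece $F_s$. In the Br\'ezis--Lieb framework the roles are $a=g_n=u_n-u$ and $b=f=u$, so whatever majorant sits on the $b$ side must be integrable \emph{when evaluated at $u$}. Your inequality places $C_\epsilon(|b|^{2-\eta}+|b|^2)$ there, which requires $u\in L^{2-\eta}(\mathbb{R}^N)$ --- and this does not follow from $u\in W(\mathbb{R}^N)$: membership in $W(\mathbb{R}^N)$ improves on $L^2$ only by a logarithm, never by a power. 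Concretely, a smoothed version of $u(x)=|x|^{-\alpha}$ for $|x|\geq 2$ with $N/2<\alpha<N/(2-\eta)$ satisfies $u\in H^1(\mathbb{R}^N)$ and $|u|^2\bigl|\log|u|^2\bigr|\sim 2\alpha|x|^{-2\alpha}\log|x|\in L^1(\mathbb{R}^N)$, hence $u\in W(\mathbb{R}^N)$, yet $u\notin L^{2-\eta}(\mathbb{R}^N)$. The defect cannot be cured by tuning exponents: any Young splitting of the cross term $|b|\,|a|^{1-\eta}$ that puts $\epsilon|a|^2$ on the $a$ side necessarily leaves a sub-quadratic power of $|b|$ on the $u$ side. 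The Vitali fallback fails for the same reason: on the infinite-measure space $\mathbb{R}^N$ Vitali requires tightness, i.e.\ no mass escaping to infinity, and that escape is precisely the phenomenon the Br\'ezis--Lieb cancellation exists to accommodate (were tightness available, the lemma would be immediate).

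The repair is to use a logarithmic, not power-type, majorant near the origin, e.g.\ $\psi_\epsilon(b)=C_\epsilon\bigl(|b|^2(1+|\log|b||)\chi_{\{|b|\leq1\}}+|b|^2\bigr)$, which is integrable at $u$ precisely because $u\in W(\mathbb{R}^N)$; the matching $\epsilon$-term must then also be of the form $\epsilon\bigl(|a|^2+A(|a|)\bigr)$, controlled along the sequence by the $L^{A}$-bound and \eqref{DA1}. The paper sidesteps this delicacy altogether: instead of a cutoff partition it writes $F$ as a \emph{difference} of the two convex, increasing functions $A$ and $B$ of \eqref{IFD} and applies the convex-function version of the Br\'ezis--Lieb lemma (Lemma \ref{lemmalieb}) to each: for $A$, convexity, $\Delta_2$-regularity and boundedness of $\{u_n\}$ in the Orlicz space $L^{A}(\mathbb{R}^N)$ verify hypotheses (ii)--(iii); for $B$, the estimate \eqref{DB} does. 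Convexity is the point --- the hypothesis that $\{j(kg_n)-kj(g_n)\}$ be bounded in $L^1$ replaces the pointwise $\epsilon$-inequality and never generates sub-quadratic powers of $u$. Your cutoff $F_s$ destroys convexity (it is $\leq 0$ with $F_s(0)=0$), which is exactly what pushed you toward the power-type majorants that fail.
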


\begin{proof}
We first recall that, by \eqref{IFD} in the Appendix,  
$|z|^2\log |z|^2=A(|z|)-B(|z|)$  for every  $z\in\mathbb{C}$. 
We need only apply the  Br\'ezis-Lieb lemma (see Lemma \ref{lemmalieb}) 
to the functions $A$ and  $B$.  By the weak-lower semicontinuity of the 
${L^2(\mathbb{R}^N)}$-norm and Fatou lemma we have $u\in W(\mathbb{R}^N)$.  
It is clear that the sequence $\{u_n\}$ is  bounded in ${L^{A}(\mathbb{R}^N)}$.
 Since $A$ is convex and increasing function with $A(0)=0$, it is follows  
by property \eqref{DA1} in the Appendix that the function $A$ satisfies the 
assumptions of Lemma \ref{lemmalieb} in Appendix. Thus,
\begin{equation}\label{AC}
\lim_{n\to \infty}\int_{\mathbb{R}^N} | A(|u_n|)-A(|u_n-u|)- A(|u_{}|)|dx=0.
\end{equation}
On the other hand, by the continuous embedding 
$W(\mathbb{R}^N)\hookrightarrow H^{1}(\mathbb{R}^N)$, we have  that 
 $\{u_n\}$ is also bounded in ${H^{1}(\mathbb{R}^N)}$.
By H\"older an Sobolev inequalities, for any $u$, $v\in H^{1}(\mathbb{R}^N)$
we have that (see \cite[Lemma 1.1]{CL})
\begin{equation}\label{DB}
\int_{\mathbb{R}^N}|B(|u(x)|)- B(|v(x)|)|dx
\leq C(1+ \|u\|^2_{H^{1}(\mathbb{R}^N)}
+ \|v\|^2_{H^{1}(\mathbb{R}^N)} )\|u-v\|_{{L^2}}.
\end{equation}
Thus, the function $B$ satisfies the hypotheses (ii) and (iii) of 
Lemma \ref{lemmalieb}.  Furthermore, an easy calculation shows that the 
function $B$ is convex, increasing and  nonnegative with  $B(0)=0$. 
Then from Lemma \ref{lemmalieb} we see that
\begin{equation}\label{BC}
\lim_{n\to \infty}\int_{\mathbb{R}^N} | B(|u_n|)-B(|u_n-u|)- B(|u_{}|)|dx=0.
\end{equation}
Thus the result follows from \eqref{AC} and  \eqref{BC}.
\end{proof}

\begin{lemma} \label{L5}
Let $2<p<2^{\ast}$ and $\omega\in \mathbb{R}$. Assume that 
$\{ u_n\}\subset W(\mathbb{R}^N)$,  $I_{\omega}(u_n)=0$ for any
$n\in \mathbb{N}$ and $S_{\omega}(u_n)\to d(\omega)$ as $n$ approaches $+\infty$.
Then, there exist a constant $C>0$ depending only on $p$ such that 
$\|u_n\|^{p}_{L^{p}(\mathbb{R}^N)}\geq C$ for every $n\in \mathbb{N}$.
\end{lemma}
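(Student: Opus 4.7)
The plan is to derive a direct inequality relating $\int |u_n|^2 \log|u_n|^2\, dx$ to $\|u_n\|_{L^p}^p$ via Jensen's inequality, and then combine it with the Nehari identity $I_\omega(u_n)=0$ together with the lower bound from Lemma \ref{L2}. Since $u_n$ belongs to the Nehari manifold we have $u_n \not\equiv 0$, so $d\mu_n := \|u_n\|_{L^2}^{-2}\, |u_n|^2\, dx$ defines a probability measure on $\mathbb{R}^N$.

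Applying Jensen's inequality to the concave function $\log$ and the nonnegative function $f = |u_n|^{p-2}$ gives
\begin{equation*}
\int_{\mathbb{R}^N} \log|u_n|^{p-2}\, d\mu_n \;\leq\; \log \int_{\mathbb{R}^N} |u_n|^{p-2}\, d\mu_n,
\end{equation*}
which after rearrangement reads
\begin{equation*}
\int_{\mathbb{R}^N} |u_n|^2 \log|u_n|^2\, dx \;\leq\; \frac{2\|u_n\|_{L^2}^2}{p-2}\, \log\!\Big(\frac{\|u_n\|_{L^p}^p}{\|u_n\|_{L^2}^2}\Big).
\end{equation*}
Using $I_\omega(u_n) = 0$ to replace the left-hand side by $\|\nabla u_n\|_{L^2}^2 + \omega \|u_n\|_{L^2}^2$, discarding the nonnegative gradient term, and dividing by $\|u_n\|_{L^2}^2 > 0$, one obtains
\begin{equation*}
\|u_n\|_{L^p}^p \;\geq\; \|u_n\|_{L^2}^2 \cdot \exp\!\Big(\frac{\omega (p-2)}{2}\Big).
\end{equation*}
Lemma \ref{L2} (equivalently, $S_\omega(u_n) = \tfrac12 \|u_n\|_{L^2}^2 \geq d(\omega)$) gives $\|u_n\|_{L^2}^2 \geq \pi^{N/2} e^{\omega+N}$, whence $\|u_n\|_{L^p}^p \geq \pi^{N/2} e^{N + p\omega/2}$, a positive constant independent of $n$.

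The main point requiring care is the applicability of Jensen's inequality, i.e.\ ensuring both sides are well defined. The function $|u_n|^{p-2}$ is $\mu_n$-integrable because $W(\mathbb{R}^N) \hookrightarrow H^1(\mathbb{R}^N) \hookrightarrow L^p(\mathbb{R}^N)$ for $2<p<2^\ast$, while $\log|u_n|^{p-2}$ is $\mu_n$-integrable since $|u_n|^2 \log|u_n|^2 \in L^1(\mathbb{R}^N)$ by the very definition of $W(\mathbb{R}^N)$ (the set $\{u_n = 0\}$ carries zero $\mu_n$-measure). Observe incidentally that the convergence hypothesis $S_\omega(u_n) \to d(\omega)$ is not actually needed for this argument; being on the Nehari manifold suffices.
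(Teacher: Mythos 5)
Your proof is correct. The Jensen step is legitimate: with the probability measure $d\mu_n=\|u_n\|_{L^2}^{-2}|u_n|^2\,dx$, the positive part of $\log|u_n|^{p-2}$ is $\mu_n$-integrable because $u_n\in H^{1}(\mathbb{R}^N)\hookrightarrow L^{p}(\mathbb{R}^N)$ for $2<p<2^{\ast}$, the whole integral is finite because $|u_n|^2\log|u_n|^2\in L^{1}(\mathbb{R}^N)$, and $\{u_n=0\}$ is $\mu_n$-null; the rearrangement, the substitution $\int_{\mathbb{R}^N}|u_n|^2\log|u_n|^2\,dx=\|\nabla u_n\|^2_{L^2}+\omega\|u_n\|^2_{L^2}$ from $I_{\omega}(u_n)=0$, and the explicit constant $\pi^{N/2}e^{N+p\omega/2}$ all check out, the $L^2$ lower bound coming from $S_{\omega}(u_n)=\frac{1}{2}\|u_n\|^2_{L^2}\geq d(\omega)\geq\frac{1}{2}\pi^{N/2}e^{\omega+N}$ (the identity on the Nehari manifold plus Lemma \ref{L2}). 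Your route, however, is genuinely different from the paper's: the paper omits the proof entirely, deferring to Cazenave \cite[Lemma 3.3]{CL}, where the argument is a splitting one --- roughly, $\|u_n\|^2_{L^2}$ is estimated by integrating separately over $\{|u_n|\leq\delta\}$, where $|u_n|^2\leq |u_n|^2\,|\log |u_n|^2|/|\log \delta^2|$ and the Orlicz bound $\int_{\mathbb{R}^N}A(|u_n|)\,dx\leq C$ (available because minimizing sequences are bounded in $W(\mathbb{R}^N)$) applies, and over $\{|u_n|>\delta\}$, where $|u_n|^2\leq \delta^{2-p}|u_n|^{p}$; combined with the nonvanishing of $\|u_n\|_{L^2}$, which is where the hypothesis $S_{\omega}(u_n)\to d(\omega)$ enters, this forces a lower bound on $\|u_n\|^{p}_{L^{p}}$. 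Your Jensen argument is shorter, self-contained, and more quantitative: it produces an explicit constant, needs neither boundedness in $W(\mathbb{R}^N)$ nor any Orlicz machinery, and --- as you correctly observe --- never uses $S_{\omega}(u_n)\to d(\omega)$, only the Nehari constraint together with the logarithmic Sobolev lower bound behind Lemma \ref{L2}; the trade-off is that it exploits the specific logarithmic structure, whereas the splitting argument works for any sequence that is merely $W$-bounded with $\|u_n\|_{L^2}$ bounded away from zero. One cosmetic point: your constant depends on $N$ and $\omega$ as well as on $p$, but since these are fixed in the statement of Lemma \ref{L5} this is harmless.
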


The proof of the above lemma follows along the same lines as \cite[Lemma 3.3]{CL}.
 We omit it.

\begin{proof}[Proof of Proposition \ref{ESSW}]
 Let $\{ u_n\} \subseteq W(\mathbb{R}^N)$ be a minimizing sequence for
$d(\omega)$, then the sequence $\{ u_n\}$ is bounded in  $W(\mathbb{R}^N)$.
Indeed, it is clear that the sequence $\|u_n\|^2_{L^2}$ is bounded.
Moreover, using the logarithmic Sobolev inequality and recalling that 
$I_{\omega}(u_n)=0$, we obtain
\begin{equation*}
\Big(1-\frac{\alpha^2}{\pi}\Big)\|\nabla u_n\|^2_{L^2}
\leq \Big(\log (\frac{e^{-(\omega+N)}}{\alpha^N})\Big)\|u_n\|^2_{L^2} 
+\big(\log \|u_n\|^2_{L^2} \big)\|u_n\|^2_{L^2}.
\end{equation*}
Taking $\alpha>0$ sufficiently small, we see that $\|\nabla u_n\|^2_{L^2}$
 is bounded, so the sequence $\{ u_n\}$ is bounded in $H^{1}(\mathbb{R}^N)$.  
Then, using  $I_{\omega}(u_n)=0$ again, and \eqref{DB} we obtain that there 
exist a constant $C>0$ such that
\begin{equation*}
\int_{\mathbb{R}^N}A(|u_n|)dx\leq \int_{\mathbb{R}^N}B(|u_n|)dx
+|\omega|\|u_n\|^2_{L^2}\leq C,
\end{equation*}
which implies, by \eqref{DA1} in the Appendix, that the sequence $\{ u_n\}$ 
is bounded in $W (\mathbb{R}^N)$.

Next, notice  that for any sequence $x_n\in \mathbb{R}^N$ we have that 
$\{ u_n(\cdot + x_n)\}$ is still a  bounded  minimizing sequence for $d(\omega)$.
 Moreover, if
\begin{equation*}
 \lim_{n \to +\infty}\sup_{y\in\mathbb{R}^N} \int _{B_{1}(y)}|u_n|^2dx =0,
\end{equation*}
then $u_n\to 0$ in $L^{p}(\mathbb{R}^N)$ for any $p\in (2, 2^{\ast})$, where 
$B_{1}(y)=\{z\in\mathbb{R}^N: |y-z|<1 \}$. Therefore, from Lemma \ref{L5} 
and the compactness of the embedding $H^{1}(B_{1}(0))\hookrightarrow L^2(B_{1}(0))$,
we deduce that there exist a sequence $y_n\in \mathbb{R}^N$ such that the weak 
limit  in $H^{1}(\mathbb{R}^N)$ of the sequence $\{ u_n(\cdot + y_n)\}$ 
is not the trivial function. Let $v_n:=u_n(\cdot+ y_n)$. 
Then  there exist $\varphi \in W(\mathbb{R}^N)\setminus\{0\}$ such that, 
up to a subsequence, $v_n\rightharpoonup \varphi$ weakly in $W (\mathbb{R}^N)$ 
and $v_n\to \varphi$ a.e.\ in  $\mathbb{R}^N$.

Now we prove that $I_{\omega}(\varphi)=0$ and $S(\varphi)=d(\omega)$. 
First,  assume by contradiction that $I_{\omega}(\varphi)<0$. 
By elementary computations, we can see that there is $0<\lambda<1$ 
such that $I_{\omega}(\lambda \varphi)=0$. Then, from the definition of 
$d(\omega)$ and  the weak lower semicontinuity of the $L^2(\mathbb{R}^N)$-norm, 
we have
\begin{equation*}
d(\omega)\leq \frac{1}{2}\|\lambda \varphi\|^2_{L^2}
<\frac{1}{2}\|\varphi\|^2_{L^2}
\leq \frac{1}{2}\liminf_{n\to \infty}\|v_n\|^2_{L^2}=d(\omega),
\end{equation*}
it which is impossible. On the other hand, assume that $I_{\omega}(\varphi)>0$. 
Since the embedding  $W(\mathbb{R}^N)\hookrightarrow {H^{1}(\mathbb{R}^N)}$ 
is continuous, we see that $v_n\rightharpoonup \varphi$ weakly in 
$H^{1}(\mathbb{R}^N)$. Thus, we have
\begin{gather}
 \|v_n\|^2_{L^2}-\|v_n-\varphi\|^2_{L^2}-\|\varphi\|^2_{L^2}\to0,  \label{2C11}\\
\|\nabla v_n\|^2_{L^2}-\|\nabla v_n-\nabla\varphi\|^2_{L^2}
-\|\nabla \varphi \|^2_{L^2}\to0\label{2C12}
\end{gather}
as $n\to\infty$. Combining \eqref{2C11}, \eqref{2C12} and Lemma \ref{L4} leads to
\begin{equation*}
\lim_{n\to \infty}I_{\omega}(v_n-\varphi)
=\lim_{n\to \infty}I_{\omega}(v_n)-I_{\omega,\gamma}(\varphi)
=-I_{\omega}(\varphi),
\end{equation*}
which combined with  $I_{\omega}(\varphi)> 0$ give us  that 
$I_{\omega}(v_n-\varphi)<0$ for sufficiently large $n$. Thus, by \eqref{2C11} 
and  applying the same argument as above, we see that
\begin{equation*}
d(\omega)\leq\frac{1}{2} \lim_{n\to \infty}\|v_n-\varphi\|^2_{L^2}
=d(\omega)-\frac{1}{2}\|\varphi\|^2_{L^2},
\end{equation*}
which is a contradiction because $\|\varphi\|^2_{L^2}>0$. Then, we deduce 
that $I_{\omega}(\varphi)=0$. In addition, by the weak lower semicontinuity 
of the $L^2(\mathbb{R}^N)$-norm,  we have
\begin{equation}\label{inequa}
d(\omega)\leq \frac{1}{2}\|\varphi\|^2_{L^2}
\leq \frac{1}{2} \liminf_{n\to \infty}\|v_n\|^2_{L^2}=d(\omega),
\end{equation}
which implies, by the definition of $d(\omega)$, that 
$\varphi\in \mathcal{N}_{\omega}$. This proves the first part of the 
statement of Proposition \ref{ESSW}.

By direct computations, we see that the Gausson \eqref{IGS1} 
 satisfies $I_{\omega}(\phi_{\omega})=0$ and 
$S_{\omega}(\phi_{\omega})= {\pi}^{N/2}e^{\omega+N}/2$. 
Thus, by Lemma \ref{L2} we have that $d(\omega)= {\pi}^{N/2}e^{\omega+N}/2$ 
and \begin{equation*}\{e^{i\theta}\phi_{\omega}(\cdot-y); 
\theta\in\mathbb{R},\, y\in\mathbb{R}^N \}\subseteq\mathcal{N}_{\omega}.
\end{equation*}
 Next, let $\varphi\in \mathcal{N}_{\omega}$. Then, by definition of 
$d(\omega)$, $\|\varphi\|^2_{L^2}=2d(\omega)= {\pi}^{N/2}e^{\omega+N}$ 
and  $I_{\omega}(\varphi)=0$.  This implies that $\varphi$ satisfies 
the equality  in \eqref{SII} with $\alpha=\sqrt{\pi}$. 
Indeed, suppose that we have the strict inequality in \eqref{SII}
 with $\alpha=\sqrt{\pi}$. Since $\varphi$ satisfies $I_{\omega}(\varphi)=0$,
 it is easy to show that in this case  $\|\varphi\|^2_{L^2}>{\pi}^{N/2}e^{\omega+N}$, 
it which is impossible. Therefore, from Lemma \ref{L1} we infer that there 
exist $r>0$, $\theta_0\in \mathbb{R} $ and $y\in \mathbb{R}^N$ such that
 \begin{equation*}
\varphi(x)=r\, e^{i\theta_0}e^{-\frac{1}{2} |x-y_{}|^2}.
\end{equation*}
Elementary calculations show that $r^2=e^{\omega +N}$. Thus, we have 
$\varphi(x)= e^{i\theta_0}\,\phi_{\omega}(x-y_{})$ and
 Proposition \ref{ESSW} is proved.
\end{proof}

\section{Stability of ground state}\label{S:3}

The proof of Theorem \ref{2ESSW} relies on the following compactness result.

\begin{lemma} \label{CSM}
Let $\{ u_n\}\subseteq W (\mathbb{R}^N)$ be a minimizing sequence for $d(\omega)$. 
Then there exist a family $(y_n)\subset \mathbb{R}^N$ and a function  
$\varphi\in\mathcal{N}_{\omega}$ such that, possibly for a subsequence only,
\begin{equation*}
u_n(\cdot-y_n)\to \varphi \quad \text{strongly in $W(\mathbb{R}^N)$}.
\end{equation*}
\end{lemma}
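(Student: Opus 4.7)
The plan is to build on the analysis already carried out in the proof of Proposition \ref{ESSW}. Applied to the minimizing sequence $\{u_n\}$, that argument already produces translations $y_n\in\mathbb{R}^N$ and a ground state $\varphi\in\mathcal{N}_\omega$ such that $v_n(x):=u_n(x+y_n)\rightharpoonup\varphi$ weakly in $W(\mathbb{R}^N)$ and $v_n\to\varphi$ a.e. on $\mathbb{R}^N$. Moreover, on the Nehari set one has $S_\omega(u)=\tfrac12\|u\|_{L^2}^2$ (by \eqref{MPE}), so the convergence $S_\omega(v_n)\to d(\omega)=S_\omega(\varphi)$ forces $\|v_n\|_{L^2}^2\to\|\varphi\|_{L^2}^2$; combined with weak $L^2$-convergence this upgrades to strong convergence $v_n\to\varphi$ in $L^2(\mathbb{R}^N)$.

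Setting $w_n:=v_n-\varphi$, the main step is to show $\|\nabla w_n\|_{L^2}\to 0$. Using the Hilbert-space identity $\|\nabla v_n\|_{L^2}^2=\|\nabla\varphi\|_{L^2}^2+\|\nabla w_n\|_{L^2}^2+o(1)$ (from weak $H^1$-convergence), the logarithmic Br\'ezis--Lieb identity of Lemma \ref{L4}, and $I_\omega(v_n)=I_\omega(\varphi)=0$, a short computation gives
\[
I_\omega(w_n)=I_\omega(v_n)-I_\omega(\varphi)+o(1)=o(1).
\]
I then apply the logarithmic Sobolev inequality of Lemma \ref{L1} to $w_n$ with $\alpha$ fixed so that $\alpha^2<\pi$; since $s\log s\to 0$ as $s\to 0^+$ and $\|w_n\|_{L^2}^2\to 0$, the inequality yields
\[
I_\omega(w_n)\ge \bigl(1-\tfrac{\alpha^2}{\pi}\bigr)\|\nabla w_n\|_{L^2}^2+o(1),
\]
forcing $\|\nabla w_n\|_{L^2}\to 0$ and hence $v_n\to\varphi$ strongly in $H^1(\mathbb{R}^N)$.

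It remains to upgrade to convergence in the full $W$-norm. With the decomposition $|z|^2\log|z|^2=A(|z|)-B(|z|)$ from the Appendix, estimate \eqref{DB} applied to the pair $(w_n,0)$ together with the $H^1$-boundedness of $\{w_n\}$ and $\|w_n\|_{L^2}\to 0$ gives $\int_{\mathbb{R}^N} B(|w_n|)\,dx\to 0$. A second application of the logarithmic Sobolev inequality to $w_n$, now using that $\|\nabla w_n\|_{L^2}\to 0$, shows $\limsup\int_{\mathbb{R}^N}|w_n|^2\log|w_n|^2\,dx\le 0$; combined with the previous step and the nonnegativity of $A$, this forces $\int_{\mathbb{R}^N} A(|w_n|)\,dx\to 0$. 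Finally, the $\Delta_2$-type property \eqref{DA1} of $A$ recorded in the Appendix translates this integral convergence into convergence in the Luxemburg norm, completing the proof of $v_n\to\varphi$ in $W(\mathbb{R}^N)$.

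The main obstacle I anticipate is the transition from weak to strong convergence in the nonlinear part of the $W$-norm: weak $H^1$-convergence alone does not control the logarithmic term $\int|v_n|^2\log|v_n|^2\,dx$, and the crux is the simultaneous use of Lemma \ref{L4} (to transfer the vanishing Nehari constraint to the difference $w_n$) and the logarithmic Sobolev inequality at small $\alpha$ (to absorb the log-integral into $\|\nabla w_n\|_{L^2}^2$). Converting integral convergence of $A(|w_n|)$ into Luxemburg-norm convergence is the secondary technical point, handled by the property \eqref{DA1}.
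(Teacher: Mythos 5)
Your overall strategy is sound and, except for the last step, it is a correct variant of the paper's argument: like the paper you start from the compactness already established in the proof of Proposition \ref{ESSW}, you obtain strong $L^2$ convergence from $\|v_n\|^2_{L^2}\to 2d(\omega)=\|\varphi\|^2_{L^2}$, and your treatment of the gradient is valid (the paper instead passes through the identity $\|\nabla v_n\|^2_{L^2}+\int A(|v_n|)\,dx\to\|\nabla\varphi\|^2_{L^2}+\int A(|\varphi|)\,dx$ and splits the limit by weak lower semicontinuity and Fatou, whereas you use $I_\omega(w_n)=o(1)$ and Lemma \ref{L1} with $\alpha^2<\pi$; both work). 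However, your final Orlicz step contains a genuine direction error. Writing $F(s)=s^2\log s^2=B(s)-A(s)$, you have $\int A(|w_n|)\,dx=\int B(|w_n|)\,dx-\int F(|w_n|)\,dx$, so with $\int B(|w_n|)\,dx\to0$ the desired conclusion $\int A(|w_n|)\,dx\to0$ requires the \emph{lower} bound $\liminf_n\int F(|w_n|)\,dx\ge0$. The second application of Lemma \ref{L1} yields only the \emph{upper} bound $\limsup_n\int F(|w_n|)\,dx\le0$ --- which is in any case automatic from $A\ge0$ and $\int B(|w_n|)\,dx\to0$ --- and combining it with the nonnegativity of $A$ gives only the trivial statement $\liminf_n\int A(|w_n|)\,dx\ge0$, not $\int A(|w_n|)\,dx\to0$. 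The missing lower bound is not free: it is precisely the point where $H^1\cap L^2$ control fails, since for small $|w_n|$ the quantity $A(|w_n|)=-|w_n|^2\log|w_n|^2$ penalizes slow decay at infinity, which neither $\|w_n\|_{H^1}$ boundedness nor $\|w_n\|_{L^2}\to0$ rules out.

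Fortunately the gap closes in one line with ingredients you already have: since $I_\omega(w_n)\to0$ (your Br\'ezis--Lieb step via Lemma \ref{L4}), $\|\nabla w_n\|_{L^2}\to0$ and $\|w_n\|_{L^2}\to0$, the identity $\int F(|w_n|)\,dx=\|\nabla w_n\|^2_{L^2}+\omega\|w_n\|^2_{L^2}-I_\omega(w_n)$ forces $\int F(|w_n|)\,dx\to0$ with both signs, hence $\int A(|w_n|)\,dx=\int B(|w_n|)\,dx-\int F(|w_n|)\,dx\to0$, after which \eqref{DA1} converts modular convergence into $\|w_n\|_{L^A}\to0$ exactly as you intended. (Your use of \eqref{DA1} on the difference $w_n$ is a legitimate alternative to the paper's route, which applies \eqref{DB} to the pair $(v_n,\varphi)$ and invokes Proposition \ref{orlicz}(ii) for the sequence $v_n\to\varphi$ with nonzero limit.) With this one-line repair the proof is complete.
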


\begin{proof}
By Proposition \ref{ESSW}, we see that exist  $(y_n)\subset \mathbb{R}^N$ 
and a function  $\varphi\in\mathcal{N}_{\omega}$ such that, up to a subsequence, 
$u_n(\cdot-y_n)\rightharpoonup \varphi$ weakly  in $W({\mathbb{R}^N})$. 
Let $v_n:=u_n(\cdot-y_n)$. From \eqref{2C11},
we infer that  $v_n\to \varphi$  in $L^2(\mathbb{R}^N)$. Then, since the 
sequence $\{ v_n\}$ is bounded in $H^{1}(\mathbb{R}^N)$, from \eqref{DB} we obtain
\begin{equation*}
 \lim_{n\to \infty}\int_{\mathbb{R}^N}B(|v_n(x)|)dx
=\int_{\mathbb{R}^N}B(|\varphi(x)|)dx,
\end{equation*}
which combined with $I_{\omega}(v_n)=I_{\omega}(\varphi)=0$  for any 
$n\in \mathbb{N}$,  gives
\begin{equation}\label{2BX1}
\lim_{n\to \infty}\Big[ \int_{\mathbb{R}^N}|\nabla v_n|^2dx
+\int_{\mathbb{R}^N}A(|v_n(x)|)dx\Big]
= \int_{\mathbb{R}^N}|\nabla \varphi|^2dx+\int_{\mathbb{R}^N}A(|\varphi(x)|)dx.
\end{equation}
Moreover, by \eqref{2BX1},  the weak lower semicontinuity of the 
$L^2(\mathbb{R}^N)$-norm and Fatou lemma, we deduce 
(see e.g. \cite[Lemma 12 in chapter V]{AH})
\begin{gather}
 \lim_{n\to \infty}\int_{\mathbb{R}^N}|\nabla v_n|^2dx
=\int_{\mathbb{R}^N}|\nabla \varphi|^2dx,\label{N1}\\
 \lim_{n \to \infty}\int_{\mathbb{R}^N}A(|v_n(x)|)dx
=\int_{\mathbb{R}^N}A(|\varphi(x)|)dx. \label{N2} 
\end{gather}
Since $v_n\rightharpoonup \varphi$ weakly  in $H^{1}(\mathbb{R}^N)$, 
it follows from  \eqref{N1}  that $v_n\to\varphi$  in $H^{1}(\mathbb{R}^N)$.  
Finally, by Proposition  \ref{orlicz}-{ii)} in Appendix  and \eqref{N2} we have 
$v_n\to\varphi$  in $L^{A}(\mathbb{R}^N)$. Thus, by definition of the 
$W({\mathbb{R}^N})$-norm, we infer that $v_n\to\varphi$  in 
$W({\mathbb{R}}^N)$. Which completes the proof.
\end{proof}

\begin{proof}[Proof of Theorem \ref{2ESSW}]
The result is proved  by contradiction. Assume that there exist  $\epsilon>0$ 
and two  sequences $\{u_{n,0}\}\subset W(\mathbb{R}^N)$, 
$\{t_n\}\subset \mathbb{R}$ such that
\begin{gather}
\|u_{n,0}-\phi_{\omega}\|_{ W(\mathbb{R}^N)}\to 0, \quad \text{ as  }
  n\to \infty, \label{T21} \\
\inf_{\theta\in \mathbb{R}}\inf_{y\in \mathbb{R}^N}  
\|u(t_n)-e^{i\theta}\phi_{\omega}(\cdot- y) \|_{W(\mathbb{R}^N)}\geq \epsilon , 
\quad \text{for any $n\in \mathbb{N}$,}\label{T22}
\end{gather}
where $u_n$ is the solution of \eqref{0NL} with initial data $u_{n,0}$. 
Set $v_n(x)= u_n(x,t_n)$. By \eqref{T21} and conservation laws, we obtain
\begin{gather}
\|v_n\|^2_{L^2}=\|u_n(t_n)\|^2_{L^2}=\|u_{n,0}\|^2_{L^2}
\to \|\phi_{\omega}\|^2_{L^2}\label{CE1} \\
E(v_n)=E(u_n(t_n))=E(u_{n,0})\to E(\phi_{\omega}),
\label{CE2}
\end{gather}
as $n\to \infty$.  In particular, it follows from \eqref{CE1} and \eqref{CE2} that,
 as $n\to \infty$,
\begin{equation}\label{A12}
S_{\omega}(v_n)\to S_{\omega}(\phi_{\omega})=d(\omega).
\end{equation}
Moreover, by combining \eqref{CE1} and \eqref{A12} lead us to 
$I_{\omega}(v_n)\to 0$ as $n\to \infty$. Next,
define the sequence $f_n(x)=\rho_nv_n(x)$ with
\begin{equation*}
\rho_n=\exp(\frac{I_{\omega}(v_n)}{2\|v_n\|^2_{L^2}}),
\end{equation*}
where $\exp(x)$ represent the exponential function. 
It is clear that $\lim_{n\to \infty}\rho_n=1$ and $I_{\omega}(f_n)=0$ 
for any $n\in\mathbb{N}$. Furthermore, since the sequence $\{v_n\}$  
is bounded in $W(\mathbb{R}^N)$, we obtain $\|v_n-f_n\|_{W(\mathbb{R}^N)}\to 0$ 
as $n\to \infty$. Then, by  \eqref{A12}, we have that $\{f_n\}$ is a minimizing 
sequence for $d(\omega)$. Thus, by Lemma \ref{CSM}, up to a subsequence, 
there exist $(y_n)\subset \mathbb{R}^N$ and a function  
$\varphi\in\mathcal{N}_{\omega}$ such that
\begin{equation}\label{UEP1}
\| f_n(\cdot-y_n)- \varphi\|_{W(\mathbb{R}^N)}\to 0 \quad \text{as $n\to +\infty$}.
\end{equation}
Now, by Proposition \ref{ESSW}, there exist  $\theta_0\in \mathbb{R} $ and
$y_0\in \mathbb{R}^N$ such that
$\varphi(x)= e^{i\theta_0}\,\phi_{\omega}(x-y_0)$. Remembering that
$v_n= u_n(t_n)$ and using \eqref{UEP1}, we obtain
\begin{equation*}
\|u_n(t_n)-e^{i\theta_0}\phi_{\omega}(\cdot- (y_0-y_n))\|_{ W(\mathbb{R}^N)}
\to 0\quad \text{as $n\to +\infty$}
\end{equation*}
which contradicts \eqref{T22}. This completes the proof.
\end{proof}

\section{Appendix}\label{S:5}

The purpose of this Appendix is to describe  the structure of space 
${W}({\mathbb{R}}^N)$. We need to introduce some notation. Define
\begin{equation*}
F(z)=|z|^2\log |z|^2\quad  \text{for every  }z\in\mathbb{C},
\end{equation*}
and as in \cite{CL},  we define the functions  $A$, $B$ on $[0, \infty)$  by
\begin{equation}\label{IFD}
A(s)=
\begin{cases}
-s^2\,\log (s^2), &\text{if $0\leq s\leq e^{-3}$;}\\
3s^2+4e^{-3}s -e^{-6}, &\text{if $ s\geq e^{-3}$;}
\end{cases}
\qquad  B(s)=F(s)+A(s).
\end{equation}
 Note that $A$ is a nonnegative  convex and increasing function, and
 $A\in C^{1}([0,+\infty))\cap C^2((0,+\infty))$. The Orlicz space 
$L^{A}(\mathbb{R}^N)$ corresponding to $A$ is defined by
\begin{equation*}
L^{A}(\mathbb{R}^N)=\big\{u\in L^{1}_{\rm loc}(\mathbb{R}^N) 
: A(|u|)\in L^{1}_{}(\mathbb{R}^N)\big\},
\end{equation*}
equipped with the Luxemburg norm
\begin{equation*}
\|u\|_{L^{A}}=\inf\big\{k>0: \int_{\mathbb{R}^N}A(k^{-1}{|u(x)|})dx\leq 1 \big\}.
\end{equation*}
Here as usual $L^{1}_{\rm loc}(\mathbb{R}^N)$ is the space of all locally 
Lebesgue integrable functions. It is proved in \cite[Lemma 2.1]{CL} that $A$ 
is a Young-function which is $\Delta_{2}$-regular and 
$(L^{A}(\mathbb{R}^N),\|\cdot\|_{L^{A}} )$ is a separable reflexive  Banach space.

Next, we consider the reflexive Banach space 
$W({\mathbb{R}}^N)=H^{1}(\mathbb{R}^N)\cap L^{A}(\mathbb{R}^N)$ 
equipped with the usual norm 
$\|u\|_{W(\mathbb{R}^N)}=\|u\|_{H^{1}(\mathbb{R}^N)}+\|u\|_{L^{A}}$ (see \eqref{ASE}).
 It is easy to see that (see \cite[Proposition 2.2]{CL} for more details)
\begin{equation*}
W(\mathbb{R}^N)=\{u\in H^{1}(\mathbb{R}^N):|u|^2\log |u|^2\in L^{1}(\mathbb{R}^N)\}.
\end{equation*}
Furthermore, it is  known that  the dual space (see \cite[Proposition 1.1.3]{CB})
\begin{equation*}
{W}'({\mathbb{R}^N})=H^{-1}(\mathbb{R}^N)+L^{A'}(\mathbb{R}^N),
\end{equation*}
where the Banach space ${W}'({\mathbb{R}^N})$ is equipped with its usual norm.
 Here, $L^{A'}(\mathbb{R}^N)$ is the dual space of $L^{A}(\mathbb{R}^N)$
 (see \cite{CL}).

Now we list some properties of the Orlicz space $L^{A}(\mathbb{R}^N)$ 
that we have used above.  For a proof of such statements we refer to 
\cite[Lemma 2.1]{CL}.

\begin{proposition} \label{orlicz}
Let $\{u_{{m}}\}$ be a sequence in  $L^{A}(\mathbb{R}^N)$, the following facts hold:
\begin{itemize}
\item[(i)] If  $u_{{m}}\to u$ in $L^{A}(\mathbb{R}^N)$, then 
 $A(|u_{{m}}|)\to A(|u|)$ in $L^{1}(\mathbb{R}^N)$ as   $n\to \infty$.

\item[(ii)] Let  $u\in L^{A}(\mathbb{R}^N)$. If  $u_{m}\to u$ a.e. in
 $\mathbb{R}^N$ and if
\begin{equation*}
\lim_{n \to \infty}\int_{\mathbb{R}^N}A(|u_{m}(x)|)dx
=\int_{\mathbb{R}^N}A(|u(x)|)dx,
\end{equation*}
then $u_{{m}}\to u$ in $L^{A}(\mathbb{R}^N)$ as   $n\to \infty$.

\item[(iii)] For any $u\in L^{A}(\mathbb{R}^N)$, we have
\begin{equation}\label{DA1}
\min \{\|u\|_{L^{A}},\|u\|^2_{L^{A}}\}
\leq  \int_{\mathbb{R}^N}A(|u(x)|)dx\
leq \max\{\|u\|_{L^{A}},\|u\|^2_{L^{A}}\}.
\end{equation}
\end{itemize}
\end{proposition}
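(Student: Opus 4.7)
The plan is to first prove (iii) by direct modular arithmetic, then derive (ii) via a Br\'ezis--Lieb identity, and finally deduce (i) from an $\epsilon$-argument combining (iii), convexity, and the $\Delta_2$-regularity of $A$.

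For (iii), I observe that $t \mapsto A(t)/t^2$ is nonincreasing on $(0,\infty)$: on the piece $0 < t \leq e^{-3}$ it equals $-\log t^2$, manifestly decreasing; on $t \geq e^{-3}$ it equals $3 + 4e^{-3}/t - e^{-6}/t^2$, whose $t$-derivative is negative; continuity at $t = e^{-3}$ is direct. Equivalently, $A(\lambda t) \geq \lambda^2 A(t)$ for $\lambda \in (0,1]$, with the reverse for $\lambda \geq 1$. Combined with convexity plus $A(0) = 0$, which gives $A(\lambda t) \leq \lambda A(t)$ for $\lambda \in [0,1]$ (and the reverse for $\lambda \geq 1$), and using that $\Delta_2$-regularity yields the modular identity $\int_{\mathbb{R}^N} A(|u|/k)\, dx = 1$ for $k = \|u\|_{L^A}$ (by monotone convergence as the test parameter approaches $k$), one writes $|u| = k \cdot (|u|/k)$, applies the scaling bound suited to $k \leq 1$ or $k \geq 1$, and integrates to obtain precisely \eqref{DA1}.

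For (ii), the function $A$ satisfies all hypotheses of the Br\'ezis--Lieb lemma invoked in the proof of Lemma \ref{L4}: it is continuous, convex, nonnegative, increasing, with $A(0) = 0$, and the $\Delta_2$-type modular bound \eqref{DA1} is supplied by (iii). Since the assumption $\int A(|u_m|)\,dx \to \int A(|u|)\,dx$ bounds $\{\int A(|u_m|)\,dx\}$, hence also $\{\|u_m\|_{L^A}\}$ by (iii), Lemma \ref{lemmalieb} gives
\begin{equation*}
\int_{\mathbb{R}^N}\bigl(A(|u_m|) - A(|u_m - u|) - A(|u|)\bigr)\,dx \to 0,
\end{equation*}
and combining with the hypothesis forces $\int A(|u_m - u|)\,dx \to 0$. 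Applying (iii) to $u_m - u$ then yields $\|u_m - u\|_{L^A} \to 0$.

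For (i), applying (iii) to $u_m - u$ immediately gives $\int A(|u_m - u|)\,dx \to 0$ from the hypothesis $\|u_m - u\|_{L^A} \to 0$. To lift this to $L^1$-convergence of $A(|u_m|)$ towards $A(|u|)$, I plan to use the pointwise estimate $|A(|u_m|) - A(|u|)| \leq \epsilon^{-1} A(|u_m - u|) + C\epsilon \bigl(A(|u|) + A(|u_m|)\bigr)$, which follows from a Young-type inequality $A'(s)\, t \leq \epsilon^{-1} A(t) + C\epsilon A(s)$ (valid because $A$ is $\Delta_2$) combined with the mean-value bound $|A(a) - A(b)| \leq A'(a+b)|a-b|$ and convexity. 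Integrating, bounding $\int A(|u_m|)$ uniformly via (iii), and letting first $m \to \infty$ and then $\epsilon \to 0$ yields the claim. The main obstacle will be establishing the Young-type estimate cleanly, since $A$ is not globally of pure-power type and its conjugate $A^*$ must be controlled through the $\Delta_2$ property; parts (ii) and (iii) are algebraically routine once the monotonicity of $A(t)/t^2$ and the Br\'ezis--Lieb framework are in place.
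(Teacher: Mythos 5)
Your proposal is correct in substance, but there is nothing in the paper to compare it against step by step: the paper states Proposition \ref{orlicz} without proof, referring instead to \cite[Lemma 2.1]{CL}, so your argument is a self-contained reconstruction of the standard Orlicz-space facts rather than a divergence from a proof the paper actually gives. The reconstruction checks out. The monotonicity of $A(t)/t^2$ is correct on both pieces, and the two one-sided values agree at $t=e^{-3}$ (both equal $6$), so you obtain $\lambda^2 A(t)\le A(\lambda t)\le \lambda A(t)$ for $0\le\lambda\le 1$, with both inequalities reversed for $\lambda\ge 1$; combined with the modular equality $\int_{\mathbb{R}^N} A(|u|/k)\,dx=1$ at $k=\|u\|_{L^{A}}$ (valid for $u\neq 0$, and requiring exactly the $\Delta_2$-finiteness of the modular at every scale that you invoke), this yields \eqref{DA1}. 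Your part (ii) is the same Br\'ezis--Lieb mechanism the paper itself deploys in Lemma \ref{L4}; the hypotheses of Lemma \ref{lemmalieb} are indeed met, since $A(kt)\le k^2A(t)$ for $k\ge 1$ controls $j(kg_n)-kj(g_n)$, and convexity with $A(2s)\le 4A(s)$ gives $A(|u_m-u|)\le 2A(|u_m|)+2A(|u|)$ with uniformly bounded integrals. For part (i), the step you flag as the main obstacle is actually avoidable: you need not control the conjugate $A^{*}$ at all. Convexity splitting, $A(t+h)\le(1-\lambda)A\bigl(\tfrac{t}{1-\lambda}\bigr)+\lambda A\bigl(\tfrac{h}{\lambda}\bigr)$, combined with $A(\mu s)\le \mu^{2}A(s)$ for $\mu\ge 1$ (already established for (iii)), gives $0\le A(t+h)-A(t)\le \tfrac{\lambda}{1-\lambda}A(t)+\tfrac{1}{\lambda}A(h)$, hence $|A(a)-A(b)|\le \epsilon\bigl(A(a)+A(b)\bigr)+C_{\epsilon}A(|a-b|)$ for $a,b\ge 0$; taking $a=|u_m|$, $b=|u|$, using $\bigl||u_m|-|u|\bigr|\le|u_m-u|$ and the monotonicity of $A$, then integrating and applying (iii) twice (once to get $\int A(|u_m-u|)\,dx\to 0$ from $\|u_m-u\|_{L^{A}}\to 0$, once for the uniform bound on $\int A(|u_m|)\,dx$) finishes (i) upon letting $m\to\infty$ and then $\epsilon\to 0$. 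Should you insist on your Young-type route, it also goes through: $A^{*}(A'(s))=sA'(s)-A(s)\le 3A(s)$ (a consequence of $A(2s)\le 4A(s)$) and $A^{*}(\epsilon y)\le \epsilon A^{*}(y)$ yield $A'(s)t\le C\epsilon A(s)+\epsilon^{-2}A(t)$ --- the power of $\epsilon^{-1}$ differs from what you wrote, but this is immaterial to the limiting argument.
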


We conclude this Appendix with Br\'ezis-Lieb's lemma: see 
\cite[Theorem 2 and Example (b)]{LBL}

\begin{lemma} \label{lemmalieb}
Suppose that  $j$ is a continuous, convex function from $\mathbb{C}$ to 
$\mathbb{R}$ with $j(0)=0$ and let $f_n=f+g_n$ be a sequence of measurable 
functions from $\mathbb{R}^N$ to $\mathbb{C}$ such that: 
\begin{itemize}
\item[(i)] $g_n\to 0$ a.e. in $\mathbb{R}^N$.
\item[(ii)] $j(Mf)$ is in $L^{1}(\mathbb{R}^N)$ for every real $M$.
\item[(iii)] There exists some fixed $k>1$ such that $\{j(kg_n)-kj(g_n)\}$ 
is uniformly bounded in $L^{1}(\mathbb{R}^N)$.
\end{itemize}
Then
\begin{equation*}
\lim_{n\to \infty}\int_{\mathbb{R}^N} | j(f+g_n)-j(g_n)- j(f)|dx=0.
\end{equation*}
\end{lemma}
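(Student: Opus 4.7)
The plan is to follow the Br\'ezis--Lieb truncation strategy. I would introduce a ``remainder'' that subtracts off the part of the integrand which is only uniformly $L^{1}$-bounded (rather than $L^{1}$-null in the limit), show that this remainder tends to $0$ in $L^{1}$ via dominated convergence, and then let the truncation parameter tend to zero.

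By hypothesis (i) and the continuity of $j$ with $j(0)=0$, one has a.e.\ in $\mathbb{R}^{N}$
\begin{equation*}
j(g_{n})\to 0,\qquad j(kg_{n})\to 0,\qquad j(f+g_{n})\to j(f),
\end{equation*}
so that $F_{n}:=j(f+g_{n})-j(g_{n})-j(f)\to 0$ a.e. The analytic heart of the argument is a convexity inequality derived from the convex-combination identity
\begin{equation*}
a+b=\frac{1}{k}(kb)+\frac{k-1}{k}\Bigl(\frac{ka}{k-1}\Bigr),
\end{equation*}
which, together with Jensen's inequality and $j(0)=0$, gives
\begin{equation*}
j(a+b)-j(b)\le\frac{1}{k}\bigl(j(kb)-k\,j(b)\bigr)+\frac{k-1}{k}\,j\!\Bigl(\frac{ka}{k-1}\Bigr).
\end{equation*}
Applying the same identity to the ``reverse'' difference $j(b)-j(a+b)$ (by writing $b=(a+b)+(-a)$ and iterating to eliminate the undesirable term $j(k(a+b))-k\,j(a+b)$ in favour of $j(kb)-k\,j(b)$ plus terms depending only on $a$), one obtains, for every $\epsilon\in(0,1)$, a two-sided pointwise bound
\begin{equation*}
|F_{n}|\le \epsilon\bigl(j(kg_{n})-k\,j(g_{n})\bigr)+\Phi_{\epsilon}(f),
\end{equation*}
where $\Phi_{\epsilon}$ depends on $f$ only through finitely many evaluations of the form $j(\pm M_{\epsilon}f)$; by hypothesis (ii), $\Phi_{\epsilon}(f)\in L^{1}(\mathbb{R}^{N})$.

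With this bound in hand, set
\begin{equation*}
W_{n}^{\epsilon}:=\bigl(|F_{n}|-\epsilon(j(kg_{n})-k\,j(g_{n}))\bigr)^{+}.
\end{equation*}
Then $W_{n}^{\epsilon}\le\Phi_{\epsilon}(f)$ pointwise, while $F_{n}\to 0$ and $j(kg_{n})-k\,j(g_{n})\to 0$ a.e.\ force $W_{n}^{\epsilon}\to 0$ a.e. Dominated convergence gives $\int_{\mathbb{R}^{N}}W_{n}^{\epsilon}\,dx\to 0$, so the triangle inequality
\begin{equation*}
\int_{\mathbb{R}^{N}}|F_{n}|\,dx\le\int_{\mathbb{R}^{N}}W_{n}^{\epsilon}\,dx+\epsilon\int_{\mathbb{R}^{N}}(j(kg_{n})-k\,j(g_{n}))\,dx
\end{equation*}
combined with the uniform $L^{1}$-bound $K:=\sup_{n}\|j(kg_{n})-k\,j(g_{n})\|_{L^{1}}$ from (iii) yields $\limsup_{n}\int|F_{n}|\,dx\le \epsilon K$. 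Since $\epsilon\in(0,1)$ is arbitrary, letting $\epsilon\to 0$ completes the proof.

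The main technical obstacle is the two-sided convexity inequality. The constant $k$ is \emph{fixed} by (iii), so the upper bound on $F_{n}^{+}$ falls out immediately from one convex combination; controlling $F_{n}^{-}=(j(a)+j(b)-j(a+b))^{+}$, however, forces one to re-apply the identity to the variable $a+b$ and absorb the spurious terms $j(k(a+b))-k\,j(a+b)$ back into the $a$-dependent piece and into $j(kb)-k\,j(b)$. This is the step where both convexity and the normalisation $j(0)=0$ enter most delicately, and where the free parameter $\epsilon$ must be separated from the fixed $k$ so that the dominating function $\Phi_\epsilon(f)$ does not blow up when $\epsilon\to 0$.
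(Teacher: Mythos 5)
Your overall scaffold --- the truncation $W_n^{\epsilon}=\bigl(|F_n|-\epsilon(j(kg_n)-kj(g_n))\bigr)^{+}$, dominated convergence for fixed $\epsilon$, and then $\epsilon\to0$ against the uniform bound from (iii) --- is exactly the mechanism of \cite[Theorem 2]{LBL}, which is all the paper itself invokes: it gives no proof of this lemma, citing Theorem 2 and Example (b) of Br\'ezis--Lieb. Your one-sided estimate is also correct as far as it goes: the identity $a+b=\frac1k(kb)+\frac{k-1}{k}\cdot\frac{ka}{k-1}$ and convexity give $j(a+b)-j(b)\le\frac1k\bigl(j(kb)-kj(b)\bigr)+\frac{k-1}{k}\,j\bigl(\frac{ka}{k-1}\bigr)$, and $j(kb)-kj(b)\ge0$ since $j(0)=0$.

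However, the two steps you only gesture at are genuine gaps. First, the coefficient your displayed inequality actually produces is the \emph{fixed} number $1/k$, not an arbitrary $\epsilon$; with $1/k$ the endgame only yields $\limsup_n\int|F_n|\,dx\le K/k$, which is not zero. The promotion to $\epsilon$ is not automatic, though it can be repaired: set $c=1+\epsilon(k-1)$ and $1-t=1/c$, write $a+b=t\cdot\frac{a}{t}+(1-t)\cdot cb$ and $cb=(1-\epsilon)b+\epsilon(kb)$; two applications of convexity then give $j(a+b)-j(b)\le\epsilon\bigl(j(kb)-kj(b)\bigr)+t\,j(a/t)$, with the $a$-part integrable for each fixed $\epsilon$ by (ii). Second, and more seriously, your plan for the reverse difference --- ``iterating to eliminate $j(k(a+b))-k\,j(a+b)$ in favour of $j(kb)-kj(b)$ plus $a$-terms'' --- does not work: any further application of the convex-combination identity to $j(k(a+b))=j(ka+kb)$ necessarily dilates the $b$-variable beyond $k$ (producing $j(k^{2}b)$, or $j(k'b)$ with $k'>k$), and hypothesis (iii) controls $j(kg_n)-kj(g_n)$ for the \emph{single} fixed $k$ only. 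For a fast-growing convex $j$, say $j(z)=e^{\Re z}-1$ on the reals, one has $j(k(a+b))-kj(a+b)\approx e^{ka}e^{kb}$, a product that cannot be split additively as $\epsilon\bigl(j(kb)-kj(b)\bigr)+\Phi(a)$, so no pointwise bound of the required form comes out of your iteration. The missing idea is the midpoint inequality: from $b=\frac12(a+b)+\frac12(b-a)$ one gets $2j(b)\le j(a+b)+j(b-a)$, hence $j(b)-j(a+b)\le j(b-a)-j(b)$, and the reverse difference reduces to the already-proved one-sided bound with $a$ replaced by $-a$ --- this is precisely where (ii) for negative multiples $M$ enters, and it is the content of Example (b) in \cite{LBL}. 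With that substitution your argument closes; without it, the two-sided bound on which the whole truncation rests is unproved.
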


\subsection*{Acknowledgements}
The author is grateful  to the anonymous referee for pointing out several 
imprecisions. The author was supported by CAPES and CNPq/Brazil.

\end{document}